\DeclareSymbolFont{Symbols}{OMS}{cmsy}{m}{n}
\DeclareMathSymbol{\Emptyset}{\mathord}{Symbols}{"3B}
\newcommand{\red}[1]{\textcolor{red}{#1}}
\newcommand\Const{\mathsf{Const}}
\newcommand\const{\mathsf{const}}
\newcommand\Prec{\mathsf{prec}}
\newcommand\Resp{\mathsf{resp}}
\newcommand\Succ{\mathsf{succ}}
\newcommand\Subperms{\mathsf{SubPerms}}
\newcommand\Traces{\mathsf{Traces}}
\newcommand{\LE}{\mathcal{L}}
\newcommand{\im}{\mathrm{im}}
\newcommand{\PossIm}{\mathsf{PossIm}}
\newcommand{\MaxPossIm}{\mathsf{MaxPossIm}}
\newcommand{\Imax}{M}
\newcommand{\ior}{\protect{\overset{occ}{\lesssim} }}
\newcommand{\opr}{\overset{ord}{\boldsymbol{\rightarrow}}}
\newcommand{\oppo}{\overset{ord}{\preceq}}
\newcommand{\opponeq}{\overset{ord}{\prec}}
\newcommand{\centered}[1]{\begin{tabular}{l} #1 \end{tabular}}
\newcommand{\GETOUT}[1]{}
\newcommand{\CoreOcc}{\mathsf{CoreOcc}}
\theoremstyle{definition}
\newtheorem{theorem}{Theorem}[section]
\newtheorem{proposition}[theorem]{Proposition}
\newtheorem{lemma}[theorem]{Lemma}
\newtheorem{property}[theorem]{Property}
\newtheorem{definition}[theorem]{Definition}
\newtheorem*{remark*}{Remark}
\newtheorem{example}[theorem]{Example}
\newtheorem*{example*}{Example}
\newcommand{\equivcls}[1]{#1/{\sim}}
\def\Oh{\mathcal{O}}
\newcommand{\myimplies}{\Rightarrow}
\title{Characterizing traces of processes defined by precedence and response constraints: an order theory approach}
\author{Mark Dukes and Anton Sohn}
\address{UCD School of Mathematics and Statistics, University College Dublin, Dublin 4, Ireland.}
\email{mark.dukes@ucd.ie, anton.sohn@ucdconnect.ie}
\keywords{Order theory; Linear extension; Precedence constraint; Response constraint; Declarative process.}
\subjclass[2020]{06A07, 68R05}
\begin{document}
\begin{abstract}
In this paper we consider a general system of activities that can, but do not have to, occur. 
This system is governed by a set containing two types of constraints: precedence and response.
A precedence constraint dictates that an activity can only occur if it has been preceded by some other specified activity. 
Response constraints are similarly defined.
An execution of the system is a listing of activities in the order they occur and which satisfies all constraints.
These listings are known as {\it{traces}}.
Such systems naturally arise in areas of theoretical computer science and decision science.
An outcome of the freedom with which activities can occur is that there are many different possible executions, 
and gaining a combinatorial insight into these is a non-trivial problem.

We characterize all of the ways in which such a system can be executed. 
Our approach uses order theory to provide a classification in terms of the linear extensions of posets constructed from the constraint sets.
This characterization is essential in calculating the stakeholder utility metrics that have been developed by the first author that allow for quantitative comparisons of such systems/processes.
It also allows for a better understanding of the theoretical backbone to these processes. 
\end{abstract}

\begingroup
\def\uppercasenonmath#1{} 
\let\MakeUppercase\relax 
\maketitle
\endgroup

\section{Introduction}

The notion of an activity requiring a previous activity to have first occurred is an innate concept in discrete modelling. 
So too is the notion of an activity occurring as a response to some activity having occurred. 
Examples are plentiful and include the 
concurrency one finds in asynchronous systems~\cite{dienetal},
the precedence diagram method~\cite{pdm} in the area of scheduling/project planning, 
the precedence graph for testing conflict serializability in database management systems~\cite[Chpt. 17]{dbms},
and finite-state verification in software systems~\cite{fsv,aceto}.

Declarative processes~\cite{vda2009,pesic2007} have their origins in the area of business process modelling and were introduced as an alternative to the traditional imperative approaches~\cite{omg}.
The declarative paradigm consists of detailing, through a set of constraints on activities, how activities may occur with respect to one another. 
A declarative process can then happen in any possible way so long as it does not violate any of the specified constraints.

While declarative processes allow for extremely abstract and specialized constraints, the majority of those that appear in a modelling context are a small collection.
One such example is the {\it{precedence constraint}} that specifies if some activity $b$ were to occur, then another activity $a$ must have preceded it.
Another is the {\it{response constraint}} the specifies should an activity $a$ occur, then activity $b$ must occur as a response.

These natural constraints can appear in a wide variety of scenarios and are not restricted to the technically oriented examples mentioned above. 
Recently the first author introduced two utility metrics for declarative processes~\cite{dukes1,stakeholder}. 
These metrics were the proven solutions to a collection of stipulated axioms.
The purpose of this was to allow for quantitative comparisons of such processes against one another with respect to prescribed notions of stakeholder satisfaction~\cite{stakeholder}.
It also allowed for the comparison of declarative processes from the viewpoints of multiple stakeholders.
Examples of this comparative process analysis in action include a hospital emergency department admission processes~\cite{stakeholder} and aspects of the Federal Disaster Assistance Policy~\cite{fdap}.
When it comes to modelling human-designed processes, the most common constraints are precedence and response constraints. 

The ways in which a declarative process may be executed are called the {\it{traces}} of the process.
A necessary object in the calculation of the utility metrics is a listing of all traces~\cite{stakeholder}.
Determining this listing is a challenging enumeration problem since certain collections of constraints can create subtle trace inter-dependencies that are non-trivial to analyze.
Such a characterization is used to calculate the total number of traces along with the refined number of traces that satisfy stakeholder `satisfaction' constraints. 

In this paper we solve the problem of characterizing the traces of declarative processes in which the constraint set contains only precedence and response constraints.
These results are readily applicable to those areas of scheduling, asynchronous systems, database management systems, and finite-state verification that were previously mentioned.
Our approach is to represent the declarative process constraint set using partially ordered sets. 
We find that linear extensions of suitably defined posets are at the heart of the trace characterization problem.
The previous statement is a somewhat simplified account of our characterization but represents the essence of the solution.

In Section~\ref{sec:preliminaries} we define declarative processes and some of the necessary concepts. 
In Section~\ref{sec:general_case} we consider the most general case of constraint sets that contain precedence and response constraints. 
The derivation of the trace set is quite involved, but is achievable, and the main result of this section is Theorem~\ref{prec_resp_traces}.
In Section~\ref{sec:implementation} we describe how said result may be implemented in order to generate the trace set of a declarative process, making use of known algorithms in the literature.
In Section~\ref{sec:special_cases} we consider three special cases of the declarative processes studied in Section~\ref{sec:general_case}.
These special cases occur when constraints are either all precedence constrains, all response constraints, or all \textit{successor constraints} (to be defined at the start of Section~\ref{sec:preliminaries}). 
We are able to leverage properties of the relations induced by the constraint sets to give specialised expressions for the trace sets which are given in Theorems~\ref{prec_traces} and \ref{succ_traces}.

\section{Preliminaries}\label{sec:preliminaries}

A declarative process $D$ is a pair $(\Sigma,\Const)$ where $\Sigma$ is a set of $n$ activities and 
$\Const$ is a listing of constraints detailing existential and temporal dependencies between activities.
These processes were recently defined and studied in a series of papers ~\cite{fdap,stakeholder,dukes1}.
In this paper we will focus on two constraints: the precedence constraint $\Prec$, and the response constraint $\Resp$.
The constraint $\Resp(a,b)$ indicates that if activity $a$ occurs then activity $b$ must occur afterwards as a response to it.
Similarly $\Prec(a,b)$ indicates that if $b$ occurs then $a$ must have occurred before it.

We will also consider a third constraint $\Succ$, termed the successor constraint, that represents the logical conjunction of $\Prec$ and $\Resp$:
$$\Succ(a,b) ~\equiv ~ \Resp(a,b) \wedge \Prec(a,b).$$
Given activities $a$ and $b$, the constraint $\Succ(a,b)$ indicates that $a$ occurs if and only if $b$ occurs, and that $a$ occurs before $b$. Constraint sets consisting of successor constraints can be seen to be a special type of the more general precedence and response constraint sets.

A trace of a declarative process $D = (\Sigma, \Const)$ is a sequence of elements of $\Sigma$ that satisfies all constraints in $\Const$. 
We will represent sequences of elements of a set $A$ as words over the alphabet $A$.
For example, the sequence $(a,b,c,d)$ will be written as $abcd$.
Note that the empty sequence, denoted by $\epsilon$, may be a trace of $D$.
In the research so far on this topic, and also in this paper, we will be interested in \textit{first-passage traces}, which are those traces in which each activity $a \in \Sigma$ occurs at most once, so from here onward we use trace to mean first-passage trace.
We denote the set of traces of $D$ by $\Traces(D)$.
Throughout this paper we make the reasonable assumption that $\Sigma$ is always nonempty.
Moreover, we will assume that if a constraint between activities $a$ and $b$, such as $\Prec(a,b)$, is in $\Const$, then $a$ and $b$ are distinct elements of $\Sigma$.

\begin{example}\label{very:first:example}
Consider $D=(\Sigma,\Const)$ with $\Sigma=\{a,b,c\}$ and $\Const=\{\Resp(c,a),\Prec(b,a)\}$.
Then $$\Traces(D)=\{\epsilon, b, ba, cba, bca\}.$$
\end{example}

Given a set $\Sigma$, let $\Subperms(\Sigma)$ be the set of all permutations of all subsets of the set $\Sigma$. For example, 
\begin{align*}
\Subperms(\{a,b,c\}) = \{&\epsilon, a, b, c, ab, ba, ac, ca, bc, cb, abc, acb, bac, bca, cab, cba\}.
\end{align*}

One straightforward way of determining the set of all traces of a system is to check whether each sequence $\tau \in \Subperms(\Sigma)$ satisfies all of the constraints and to include it in $\Traces(D)$ if so.
A different approach would be to consider the $|\Const|$ different declarative systems $\{(\Sigma, {\const})\}_{\const \in \Const}$, 
generate the traces for each of these systems, and then take the intersection of the traces for the $|\Const|$ systems. While not as brute force as the first method, this is not far in the direction of an improvement.
Ideally, we would like a method that takes into account the specific structure of $\Const$ and can announce the traces of the system as a function of this structure. 
This is the motivation behind the direction we take in this paper.

Before beginning, there are several concepts best introduced now.
For $n$ a positive integer we denote by $[n]$ the set $\{ 1, \dots, n \}$.
Given a sequence $x = x_1 \dots x_k$ we define the image of $x$, denoted $\im(x)$, to be the set $\{x_1, \dots, x_k\}$ of terms in that sequence.
We define the restriction of a sequence $x$ to the set $A$, denoted $x|_A$, to be the subsequence of $x$ obtained by deleting from $x$ all those terms which are not in $A$. 
Given a binary relation $R$ and a set $A$, we define the restriction of $R$ to $A$, denoted by $R|_A$, to be the relation $R \cap (A \times A) = \{(a,b) \in R : a,b \in A \}$. We denote by $R^\oplus$ the reflexive and transitive closure of $R$ on the set on which it is defined, and we denote by $R^T$ the transpose of $R$.
If $\sim$ is an equivalence relation on the set $A$, then we denote by $[a]_\sim$ the $\sim$-equivalence class of $a \in A$ and we denote by $\equivcls{A}$ the set of all $\sim$-equivalence classes.

\subsection{Preorders and partial orders}

A \textit{preorder} on a set $P$ is a binary relation $\lesssim$ on $P$ that is reflexive and transitive. 
A \textit{partial order} on a set $P$ is a preorder $\preceq$ on $P$ which is also antisymmetric. 
We call the pair $(P, \preceq)$ a \textit{partially ordered set} or \textit{poset}.
Two elements $a,b \in P$ are said to be \textit{comparable} if either $a \preceq b$ or $b \preceq a$, and \textit{incomparable} otherwise.
If every pair of elements of $P$ is comparable, then $\preceq$ is a $\textit{total order}$, and we call the pair $(P, \preceq)$ a totally ordered set. 
A subset $A$ of $P$ is called an \textit{anti-chain} of the poset $(P, \preceq)$ if no two elements of $A$ are comparable, and a \textit{chain} if any two elements of $A$ are comparable.
A chain is called \textit{maximal} if it is not contained in a larger chain.
A chain $A$ is called \textit{saturated} if there does not exist $c \in P \backslash A$  such that $a \prec c \prec b$ for some $a,b \in A$ and such that $A \cup \{c\}$ is a chain.
A subset $I$ of $P$ is called an \textit{down-set} of the preordered set $(P, \lesssim)$ if for all $a \in P$ and $b \in I$ it satisfies $a \lesssim b \Rightarrow a \in I$.
The \textit{principal down-set} generated by $b \in P$ is the set $\{ a \in P : a \lesssim b \}$.
The set of all down-sets of a $(P, \lesssim)$ when ordered by inclusion forms a poset denoted by $\mathcal{J}((P, \lesssim))$.
An \textit{induced subposet} of a poset $(P, \preceq_P)$ is a poset $(Q, \preceq_Q)$ such that $Q \subseteq P$ and for all $a, b \in Q, \, a \preceq_Q b \iff a \preceq_P b$ (or equivalently $\preceq_Q \, = \, \preceq_P \! |_Q$).

The \textit{Hasse diagram} of the preorder $\lesssim$ is a transitive reduction of the directed graph $(P, \lesssim)$. 
The Hasse diagram of a preorder need not be unique, since we can permute the vertices in any cycle to obtain an equally valid Hasse diagram.
A preorder is antisymmetric (and hence a partial order) exactly when its Hasse diagram is acyclic, and in this case the Hasse diagram is unique.

\subsection{Linear extensions}

A \textit{linear extension} of a poset $(P,\preceq)$ is a total ordering of $P$ which respects $\preceq$.
That is, a totally ordered set $\sigma = (P, \preceq_\sigma)$ such that for all $a,b \in P$, $a \preceq b \myimplies a \preceq_\sigma b$.
If $\sigma = (\{a_1, \dots, a_n\}, \preceq_\sigma)$ is the totally ordered set such that $a_1 \prec_\sigma \dots \prec_\sigma a_n$, then by abuse notation we also refer to it as the sequence $\sigma = a_1 \dots a_n$.

\begin{example}
    For example, let $P$ be the poset on $\{a,b,c,d\}$ with Hasse diagram:
    \begin{center}
    \begin{tikzpicture}
        \node (a) at (0,0) {$a$};
        \node [above of=a] (c)  {$c$};
        \node [right of=a ] (b) {$b$};
        \node [above of=b] (d) {$d$};
        \draw [->] (a) -- (c);
        \draw [->] (b) -- (c);
        \draw [->] (b) -- (d);
    \end{tikzpicture}
    \end{center}
    Then $\sigma = abcd$ is a linear extension of $P$, whereas $\sigma= cbad$ is not.
    The set of all linear extensions of $P$ is 
    $$\{abcd, bacd, abdc, badc, bdac\}.$$
\end{example}

\subsection{Independence systems}
An \textit{independence system} is a pair $(E, \mathcal{I})$ consisting of a finite set $E$, and a non-empty collection $\mathcal{I}$ of 
subsets of $E$ which satisfies the condition $I \in \mathcal I$ and $I' \subseteq I \myimplies I' \in \mathcal{I}$.
We call the elements of $\mathcal{I}$ \textit{independent sets}.

\section{Precedence-and-response-only constraint sets}\label{sec:general_case}

In this section we investigate the most general case of this paper.
Consider a declarative process $D = (\Sigma, \Const)$ where $\Const$ consists only of precedence and response constraints. 
We refer to such a declarative process as a \textit{precedence-and-response-only} declarative process.
Our main result is a description for $\Traces(D)$ in terms of order theoretic concepts related to the interdependencies stated in the constraint set.
The following is an example of such a process and will be used to illustrate our approach throughout the section.

\begin{example}\label{prec:resp:example}
    The declarative process $D = (\Sigma,\Const)$ where $\Sigma=\{a,b,c,d,e\}$ and 
	$$\Const=\{ \Resp(b,a), \Resp(c,a), \Resp(d,e), \Resp(e,c), \Prec(a,d), \Prec(b,d), \Prec(d,e) \}$$ is a precedence-and-response-only declarative process.
\end{example}

Our approach to determining $\Traces(D)$ will involve first determining the set
$$\PossIm(D) := \{ \im(\tau) : \tau \in \Traces(D)\},$$
the collection of all possible sets that occur as the image of some trace of $D$, and then arranging each of these sets in all possible ways that the constraint set allows.
Notice that the way in which $\Resp(b,a)$ constrains the activities $a$ and $b$ can be encapsulated in two independent statements:
\begin{enumerate}
    \item if $b$ occurs then $a$ must also occur, and
    \item if $a$ and $b$ both occur then $b$ must appear before $a$.
\end{enumerate}
The same description can be seen to hold for precedence constraints, the only difference being that in the first statement, the activities would be switched.

The statement (i) tells us something about the possible images that a trace of the declarative process can have.
For example, we know that there cannot be a trace of $D$ with image $\{b,c\}$, since due to the constraint $\Resp(b,a) \in \Const$, the occurrence of $b$ \textit{implies the occurrence of} $a$. 
Statements of this kind are key to the first step of our approach, determining the set $\PossIm(D)$.

Statement (ii) provides information about the order in which activities may occur in a trace of the declarative process.
For example, we know that $acb$ cannot be a trace of $D$, since due to the constraint $\Resp(b,a) \in \Const$, the \textit{order $ba$ must be preserved}.
Statements of this kind are key to the second step of our approach, arranging sets in all possible ways to form valid traces. 
We will need to introduce two relations which correspond to statements of type (i) and (ii) respectively.

\begin{definition}\label{ior}
    Let $D = (\Sigma, \Const)$ be a precedence-and-response-only declarative process.
    Then the \textit{implied-occurrence relation} of $D$ is the relation 
    $$\ior ~:=~ \{ (a,b) \in \Sigma^2 : \Prec(a,b) \in \Const \text{ or } \Resp(b,a) \in \Const \}^\oplus$$
    on $\Sigma$.
\end{definition}

Notice that the relation $\ior$ is a preorder on $\Sigma$ by construction.
From the previous discussion, the implied-occurrence relation of $D$ can be easily seen to satisfy the following property. 

\begin{property}\label{ior_property}
    If $a \ior b$ for some $a,b \in \Sigma$, then in any trace in which $b$ occurs, $a$ must also occur. 
\end{property}

\begin{definition}\label{opr}
    Let $D = (\Sigma, \Const)$ be a precedence-and-response-only declarative process. 
    Then the \textit{order-preserving relation} of $D$ is the relation 
    $$\opr ~ := ~ \{ (a,b) \in \Sigma^2 : \Prec(a,b) \in \Const \text{ or } \Resp(a,b) \in \Const \}$$
    on $\Sigma$.
\end{definition}

From the previous discussion, the order-preserving relation of $D$ can be easily seen to satisfy the following property. 

\begin{property}\label{opr_property}
    If $a \opr b$ for some $a,b \in \Sigma$, then $a$ must appear before $b$ in any trace in which both activities occur.
\end{property}

Note that $\opr$ is not a preorder. 
The fact that we do not take the transitive closure in the definition of the order preserving relation is a very important subtlety which will be explained later.

\begin{example}
    Consider again the declarative process $D$ from Example~\ref{prec:resp:example}.
    The Hasse diagram of the preordered set $(\Sigma, \ior)$ is given in Figure~\ref{fig_pre}.
    \begin{figure}
        \begin{center}
            \begin{tikzpicture}
                \node (one) at (0,0) {$a$};
                \node (two) at (-1,1) {$b$};
                \node (three) at (1,1) {$c$};
                \node (four) at (0,2) {$d$};
                \node (five) at (1.5,2) {$e$};
                \draw[->] (one) -- (two);
                \draw[->] (one) -- (three);
                \draw[->] (two) -- (four);
                \draw[->] (three) -- (four);
                \draw[->] (four) to [out=30,in=150] (five);
                \draw[->] (five) to [out=210,in=330] (four); 
            \end{tikzpicture}
        \end{center}
        \caption{
		Hasse diagram for the preordered set 
		from 
		Example
		\protect{\ref{prec:resp:example}}  
		}
		\label{fig_pre}
    \end{figure}
    The down-sets of $(\Sigma, \ior)$ are $\Emptyset, \{a\}, \{a,b\},$ $\{a,c\}, \{a,b,c\}$, and $\Sigma$.
    We also have $\opr ~=~ \{ (b,a), (c,a), (d,e), (e,c), (a,d), (b,d) \}$.
\end{example}

We now move towards classifying $\PossIm(D)$.

\begin{lemma}\label{ior_lemma}
    If $\tau \in \Traces(D)$ then $\im(\tau)$ is a down-set of $(\Sigma, \ior)$.
\end{lemma}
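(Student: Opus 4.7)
The plan is to unpack the definition of a down-set and appeal directly to Property~\ref{ior_property}. Fix $\tau \in \Traces(D)$ and set $A = \im(\tau)$. To show that $A$ is a down-set of $(\Sigma, \ior)$, one must verify that for every $b \in A$ and every $a \in \Sigma$ with $a \ior b$, the element $a$ also lies in $A$.

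The argument then has two short steps. First, unravel $b \in A = \im(\tau)$ as the statement that $b$ occurs in the trace $\tau$. Second, invoke Property~\ref{ior_property}: because $a \ior b$, any trace in which $b$ occurs must also contain $a$. Combining these gives $a \in \im(\tau) = A$, which is the required closure condition.

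If one wishes to be more explicit rather than simply citing Property~\ref{ior_property}, I would give a short induction on the number of generating pairs used to witness $a \ior b$ in the reflexive-transitive closure. The reflexive case $a=b$ is trivial. The generator step splits into two cases: $\Prec(a,b) \in \Const$, whose semantics forces $a$ to have already occurred whenever $b$ occurs; and $\Resp(b,a) \in \Const$, whose semantics forces $a$ to occur as a response whenever $b$ occurs. The transitive step just chains these implications along a witness.

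The main obstacle, such as it is, is purely bookkeeping: one must be careful with the argument swap in the definition of $\ior$ (the $\Resp$ generator is $\Resp(b,a)$ witnessing the pair $(a,b)$), and one should notice that the order information carried by $\opr$ plays no role here, since membership in a down-set depends only on which activities appear and not on when. Consequently the lemma follows essentially directly from Property~\ref{ior_property}.
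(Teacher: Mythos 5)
Your proposal is correct and matches the paper's proof, which likewise fixes $b \in \im(\tau)$ with $a \ior b$ and concludes $a \in \im(\tau)$ directly from Property~\ref{ior_property}. The optional induction over the generators of the reflexive--transitive closure is just a more explicit justification of that property and does not change the argument.
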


\begin{proof}
    Let $a,b \in \Sigma$ be such that $b \in \im(\tau)$ and $a \ior b$. 
    Then $a \in \im(\tau)$ since $\ior$ satisfies Property~\ref{ior_property}.
\end{proof}

The converse of the above lemma does not hold, i.e. a down-set of $(\Sigma, \ior)$ need not be the image of some trace.
Consider for example $D = ( \{a,b\}, \{ \Prec(a,b), \Prec(b,a) \})$. Here $\{a,b\}$ is certainly a down-set of $(\Sigma, \ior)$, but neither the sequence $ab$ nor $ba$ satisfies both constraints. 
The following construction will allow us to characterize the down-sets which \textit{are} the image of some trace.
It will then also be used to determine $\Traces(D)$.

\begin{definition}\label{oppo}
    Let $D = (\Sigma, \Const)$ be a precedence-and-response-only declarative process.
    Recall $\ior$ and $\opr$ from Definitions~\ref{ior}~and~\ref{opr}, respectively.
    If $I$ is a down-set of $(\Sigma, \ior)$ then define $\oppo_I$ to be the relation $(\opr|_I)^\oplus$ on $I$.
\end{definition}

Naturally, we write $a \opponeq_I b$ when $a \oppo_I b$ and $a \neq b$.
Notice that $\oppo_I$ is a preorder on $I$ by construction, and hence it is a partial order if and only if it is antisymmetric. 
 
\begin{lemma}\label{opr_lemma}
    Let $\tau \in \Traces(D)$ and let $I = \im(\tau)$. 
    If $a_0,a_m \in I$ are such that $a_0 \opponeq_I a_m$, then $a_0$ appears before $a_m$ in $\tau$.
\end{lemma}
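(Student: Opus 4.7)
The plan is to unpack the definition of $\oppo_I$ as a reflexive-transitive closure, reduce the statement to a chain of single $\opr$-steps inside $I$, and then apply Property~\ref{opr_property} step by step, concluding via transitivity of ``appears before'' in the sequence $\tau$.

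First I would use Definition~\ref{oppo}, which says $\oppo_I = (\opr|_I)^\oplus$. Since $a_0 \neq a_m$ and $a_0 \oppo_I a_m$, there must exist a sequence $a_0, a_1, \ldots, a_m \in I$ with $m \geq 1$ such that $a_i \,(\opr|_I)\, a_{i+1}$ for every $i \in \{0, 1, \ldots, m-1\}$. In particular, each $a_i$ lies in $I = \im(\tau)$; this is precisely why the restriction in the definition of $\oppo_I$ matters, and it is the one point where I would be careful, since Property~\ref{opr_property} is only informative about pairs of activities that \emph{both} occur in the trace.

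Next, for each $i$ I would apply Property~\ref{opr_property} to the pair $(a_i, a_{i+1})$: because $a_i \opr a_{i+1}$ and both $a_i, a_{i+1} \in \im(\tau)$, we conclude that $a_i$ appears before $a_{i+1}$ in $\tau$. Finally, since $\tau$ is a sequence (in fact a first-passage trace, so each activity occurs at most once) the binary relation ``appears before in $\tau$'' is transitive, and iterating along the chain $a_0, a_1, \ldots, a_m$ gives that $a_0$ appears before $a_m$ in $\tau$.

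I do not expect a real obstacle here; the whole content of the lemma is the bookkeeping observation that the closure in Definition~\ref{oppo} is taken of the restriction $\opr|_I$ rather than of $\opr$ itself, which guarantees the intermediate witnesses $a_1, \ldots, a_{m-1}$ lie in $\im(\tau)$ and hence that Property~\ref{opr_property} can legitimately be invoked at every step. The argument is essentially a one-line induction on $m$.
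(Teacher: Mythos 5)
Your proof is correct and follows essentially the same route as the paper's: unpack $\oppo_I=(\opr|_I)^\oplus$ into a chain of $\opr$-steps with all intermediate elements in $I=\im(\tau)$, apply Property~\ref{opr_property} to each consecutive pair, and conclude by transitivity of ``appears before''. Your explicit remark that the restriction to $I$ is what licenses each application of Property~\ref{opr_property} is exactly the point the paper relies on (and emphasizes elsewhere), so nothing further is needed.
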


\begin{proof}
    By definition of $\oppo_I$ we have that $a_0 \opr a_1 \opr \dots \opr a_m$ for some $a_1, \dots, a_{m-1} \in I$.
    Since $\opr$ satisfies Property~\ref{opr_property}, we have that $a_{i-1}$ appears before $a_i$ in $\tau$ for all $i \in [m]$.
    Hence $a_0$ must appear before $a_m$ in $\tau$.
\end{proof}

So the traces of $D$ must comply with $\oppo_I$ by construction. 
In fact, we will see later that $\oppo_I$ completely describes the ways in which we can and cannot arrange the elements of $I$ to form a trace.

It is important to mention that in the definition of $\oppo_I$ we restrict to $I$ \textit{before} taking the transitive closure.
The following example illustrates why this is necessary.

\begin{example}
    Consider the declarative process $D = (\{a,b,c\}, \{ \Prec(a,b), \Resp(b,c) \})$. 
    We have that 
	$
	\ior \, = \{ (a,a), (b,b), (c,c), (a,b), (c,b) \}$ and $
	\opr \, = \{ (a,b), (b,c) \}$. 
    Recall that $\opr$ is constructed to have the property that if $a \opr b$ for some $a,b \in \Sigma$, then $a$ must appear before $b$ in any trace in which both activities occur.
    It would seem reasonable to assume that $\opr\!\vphantom{=}^\oplus$ would still respect this property, since if $a$ appears before $b$ which in turn appears before $c$, 
	then $a$ would have to appear before $c$, but there is an error in this reasoning.
    In our example, $\opr\!\vphantom{=}^\oplus$ contains $(a,c)$, and $I = \{a, c\}$ is a down-set of $(\Sigma, \ior)$ and so is a contender for being the image of a trace.
    If we restrict $\opr\!\vphantom{=}^\oplus$ to $I$, it still contains $(a,c)$, which means that the sequence $ca$ cannot be in $\Traces(D)$.
    Again this seems reasonable, but by examining the constraint set, note that if $b$ does not occur in a trace then there is no restriction on the order in which $a$ and $c$ must occur.	
    This shows that $ca$ is, in fact, in $\Traces(D)$.
\end{example}

Before classifying $\PossIm(D)$, and subsequently $\Traces(D)$, we prove the following lemma.

\begin{lemma}\label{LE_is_trace}
    Let $I$ be a down-set of $(\Sigma, \ior)$ such that $\oppo_I$ is antisymmetric. 
    Then $\LE((I, \oppo_I)) \subseteq \Traces(D)$.
\end{lemma}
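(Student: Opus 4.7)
The plan is to fix an arbitrary $\sigma \in \LE((I, \oppo_I))$ and verify directly from the two constraint types that $\sigma$ satisfies every element of $\Const$. Since a linear extension is a total order on the same underlying set, $\im(\sigma) = I$, so the task reduces to checking, for each $c \in \Const$, that the existential and temporal requirements imposed by $c$ on $\sigma$ are met. The two pieces of structure available are: $I$ is closed downward under $\ior$, and the order of $\sigma$ respects $\oppo_I$. I expect these to line up exactly with requirements (i) and (ii) discussed before Definitions~\ref{ior} and \ref{opr}.

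First I would handle a precedence constraint $\Prec(a,b) \in \Const$. By construction $(a,b) \in \;\ior$, so if $b \in I$ then $a \in I$ because $I$ is a down-set of $(\Sigma, \ior)$; this supplies the existential half of the constraint. For the temporal half, $(a,b) \in \;\opr$ directly from Definition~\ref{opr}, so $(a,b) \in \opr|_I \subseteq \;\oppo_I$, and hence $a$ precedes $b$ in $\sigma$ because $\sigma$ is a linear extension.

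Next I would handle a response constraint $\Resp(a,b) \in \Const$. Here one must keep track of the swapped direction: the generating pair contributed to $\ior$ is $(b,a)$, so $b \ior a$, and therefore if $a \in I$ then $b \in I$ by the down-set property. Meanwhile $\opr$ contains $(a,b)$ directly from $\Resp(a,b)$, so again $a \oppo_I b$ and $a$ precedes $b$ in $\sigma$, which is exactly what the response constraint demands.

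The main (and only) obstacle is purely notational: keeping the orientations straight, because $\ior$ is defined so that $\Resp(a,b)$ generates the reversed pair $(b,a)$, whereas $\opr$ preserves the orientation of both constraint types. Once these are separated cleanly as above, no further argument is needed, and concluding $\sigma \in \Traces(D)$ is immediate since $\sigma$ satisfies all constraints in $\Const$.
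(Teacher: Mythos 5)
Your proposal is correct and follows essentially the same route as the paper's own proof: reduce to checking each constraint on a fixed linear extension, use the down-set property of $I$ under $\ior$ for the existential half and the containment $\opr|_I \subseteq \; \oppo_I$ for the temporal half, with the orientations handled exactly as in Definitions~\ref{ior} and~\ref{opr}. The only cosmetic difference is that the paper treats the response case by symmetry rather than writing it out, and explicitly notes the vacuous cases ($I=\Emptyset$, or the relevant activity absent from $I$) before invoking $\opr|_I$.
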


\begin{proof}
    First note that $(I, \oppo_I)$ is a poset since $\oppo_I$ is already reflexive and transitive by construction.
    Let $\tau \in \LE((I, \oppo_I))$.
    Then $\im(\tau) = I$.
    If $I = \Emptyset$ then $\tau = \epsilon \in \Traces(D)$ vacuously, so suppose that $I \neq \Emptyset$.
    We wish to show that $\tau$ satisfies some arbitrary $\Prec(a,b), \Resp(c,d) \in \Const$.
    
    Suppose that $b \in \im(\tau)$ since otherwise $\tau $ satisfies $ \Prec(a,b)$ vacuously.
    From the definition of $\ior$, we have that $a \ior b$ which implies that $a \in I = \im(\tau)$ since $I$ is a down-set of $(\Sigma, \ior)$.
    From the definition of $\opr$, we have that $a \opr b$ so $a \opr|_I b$ and finally $a \opponeq_I b$.
    Hence $a \prec_\tau b$ since $\tau$ is a linear extension of $(I, \oppo_I)$.
    Thus $a$ appears before $b$ in $\tau$, so $\tau $ satisfies $ \Prec(a,b)$.
    By exactly the same reasoning we can show that $\tau $ satisfies $ \Resp(c,d)$.
    Therefore $\tau \in \Traces(D)$.
\end{proof}

We may now state and prove the classification of $\PossIm(D)$.

\begin{proposition}\label{prec_resp_possim}
    If $D$ is a precedence-and-response-only declarative process, then $\PossIm(D)$ is the set of all down-sets $I$ of $(\Sigma, \ior)$ for which $\oppo_I$ is antisymmetric.
\end{proposition}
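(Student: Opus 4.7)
The plan is to prove the two set inclusions separately. For the inclusion $\PossIm(D) \subseteq \{I : I \text{ is a down-set of } (\Sigma, \ior) \text{ and } \oppo_I \text{ is antisymmetric}\}$, I would start with an arbitrary $\tau \in \Traces(D)$ and set $I = \im(\tau)$. That $I$ is a down-set of $(\Sigma, \ior)$ is exactly the content of Lemma~\ref{ior_lemma}, so only the antisymmetry of $\oppo_I$ requires work. To establish antisymmetry, I would argue by contradiction: suppose there exist distinct $a,b \in I$ with $a \oppo_I b$ and $b \oppo_I a$. Since $a \neq b$, this means $a \opponeq_I b$ and $b \opponeq_I a$, and Lemma~\ref{opr_lemma} then forces $a$ to appear before $b$ in $\tau$ and $b$ to appear before $a$ in $\tau$, which is impossible.

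For the reverse inclusion, I would take a down-set $I$ of $(\Sigma, \ior)$ for which $\oppo_I$ is antisymmetric and exhibit a trace whose image is $I$. Since $\oppo_I$ is a preorder by construction (see the remark following Definition~\ref{oppo}), antisymmetry upgrades $(I, \oppo_I)$ to a poset. Every finite poset admits at least one linear extension (and the empty poset is trivially linearly extended by $\epsilon$), so I may pick any $\tau \in \LE((I, \oppo_I))$. By Lemma~\ref{LE_is_trace}, $\tau \in \Traces(D)$, and by the definition of a linear extension, $\im(\tau) = I$. Hence $I \in \PossIm(D)$.

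The argument is essentially a bookkeeping exercise that glues together the three preceding lemmas (Lemmas~\ref{ior_lemma}, \ref{opr_lemma}, and \ref{LE_is_trace}) with the standard fact that every finite poset has a linear extension, so I do not expect a serious obstacle. The only subtle point worth flagging is that one must remember to handle the $I = \Emptyset$ case separately (or observe that the empty sequence $\epsilon$ is a valid linear extension), which matches the convention already used in the proof of Lemma~\ref{LE_is_trace}. The most conceptually delicate step was already taken care of upstream: the fact that $\oppo_I$ is defined by restricting $\opr$ to $I$ \emph{before} taking the transitive closure is precisely what allows antisymmetry of $\oppo_I$ to be the right characterizing condition, as illustrated by the example immediately preceding Lemma~\ref{LE_is_trace}.
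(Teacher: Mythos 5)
Your proposal is correct and follows essentially the same route as the paper's own proof: the forward inclusion via Lemma~\ref{ior_lemma} together with a contradiction argument using Lemma~\ref{opr_lemma}, and the reverse inclusion by picking a linear extension of the poset $(I, \oppo_I)$ and invoking Lemma~\ref{LE_is_trace}. The only (harmless) addition is your explicit appeal to the existence of a linear extension of a finite poset, which the paper leaves implicit.
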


\begin{proof}
    We prove that $I \in \PossIm(D)$ if and only if $I$ is a down-set of $(\Sigma, \ior)$ and $\oppo_I$ is antisymmetric.
	Let $I \in \PossIm(D)$. 
    Then $I = \im(\tau)$ for some $\tau \in \Traces(D)$. 
    Hence $I$ is a down-set of $(\Sigma, \ior)$ by Lemma~\ref{ior_lemma}.
    Suppose for a contradiction that $\oppo_I$ is not antisymmetric, and let $a,b \in I$ be such that $a \opponeq_I b$ and $b \opponeq_I a$.
    By Lemma~\ref{opr_lemma}, $a$ must appear before $b$ which in turn must appear before $a$ in $\tau$.
    So $a$ appears more than once in $\tau$, a contradiction.
      
    Conversely, let $I$ be a down-set of $(\Sigma, \ior)$ such that $\oppo_I$ is antisymmetric. 
    Since $\oppo_I$ is antisymmetric the pair $(I, \oppo_I)$ is a poset.
    Let $\tau \in \LE((I, \oppo_I))$.
    By Lemma~\ref{LE_is_trace} we have $\tau \in \Traces(D)$.
    But $\im(\tau) = I$ so $I \in \PossIm(D)$.
\end{proof}

\newcommand\tpone{
	\begin{tikzpicture}[scale=0.5]
		\tiny
    	\node [fill,circle,scale=0.3] (three) at (0,0) [label={right:$a$}]{};
	\end{tikzpicture}
}
\newcommand\tptwo{
	\begin{tikzpicture}[scale=0.5]
		\tiny
    	\node [fill,circle,scale=0.3] (two) at (0,0) [label={right:$b$}]{};
    	\node [fill,circle,scale=0.3] (one) at (0,0.75) [label={right:$a$}]{};
		\draw[->] (two) -- (one);
	\end{tikzpicture}
}
\newcommand\tpthree{
	\begin{tikzpicture}[scale=0.5]
		\tiny
    	\node [fill,circle,scale=0.3] (three) at (0,0) [label={right:$c$}]{};
    	\node [fill,circle,scale=0.3] (one) at (0,0.75) [label={right:$a$}]{};
		\draw[->] (three) -- (one);
	\end{tikzpicture}
}
\newcommand\tpfour{
	\begin{tikzpicture}[scale=0.5]
		\tiny
    	\node [fill,circle,scale=0.3] (two) at (0,0) [label={left:$b$}]{};
    	\node [fill,circle,scale=0.3] (one) at (0.375,0.75) [label={right:$a$}]{};
    	\node [fill,circle,scale=0.3] (three) at (0.75,0) [label={right:$c$}]{};
		\draw[->] (two) -- (one);
		\draw[->] (three) -- (one);
	\end{tikzpicture}
}
\newcommand\tpfive{
	\begin{tikzpicture}[scale=0.5]
		\tiny
    	\node [fill,circle,scale=0.3] (two) at (0,0) [label={left:$b$}]{};
    	\node [fill,circle,scale=0.3] (one) at (0,0.8) [label={right:$a$}]{};
        \node [fill,circle,scale=0.3] (four) at (-0.4,1.4) [label={left:$d$}]{};
        \node [fill,circle,scale=0.3] (three) at (0.4,1.4) [label={right:$c$}]{};
    	\node [fill,circle,scale=0.3] (five) at (0,2) [label={right:$e$}]{};
		\draw[->] (two) -- (one);
        \draw[->] (one) -- (four);
        \draw[->] (four) -- (five);
        \draw[->] (five) -- (three);
        \draw[->] (three) -- (one);
	\end{tikzpicture}
}

\begin{example}\label{prec_resp_table}
    Consider again the declarative process $D$ from Example~\ref{prec:resp:example}.
    The relations $\opponeq_I$ corresponding to each of the down-sets $I$ of $(\Sigma, \ior)$, as well as their Hasse diagrams, are given in Table~\ref{hdias}.
	\begin{table}
    \begin{center}
        \begin{tabular}{|c|c@{\quad}|@{\quad}c@{\quad}|@{\quad}c|} \hline
            $k$ & $I_k$ & $\opponeq_{I_k}$ & Hasse diagram \\ \hline \hline
            \centered{$1$} & \centered{$\Emptyset$} & \centered{$\Emptyset$} &    \\ \hline
            \centered{$2$} & \centered{$\{a\}$} & \centered{$\Emptyset$} & \centered{\tpone} \\ \hline
            \centered{$3$} & \centered{$\{a,b\}$} & \centered{$\{(b,a)\}$} & \centered{\tptwo} \\ \hline
            \centered{$4$} & \centered{$\{a,c\}$} & \centered{$\{(c,a)\}$} & \centered{\tpthree} \\ \hline
            \centered{$5$} & \centered{$\{a,b,c\}$} & \centered{$\{(b,a),(c,a)\}$} & \centered{\tpfour} \\ \hline
            \centered{$6$} & \centered{$\Sigma$} & \centered{$\{(b,a),(c,a),(d,e),(e,c),(a,d),(b,d),$$(a,e),(a,c),$\\$(d,c),(d,a),(e,a),(e,d),(c,d),(c,e),(b,e),(b,c)\}$} & \centered{\tpfive} \\ \hline
        \end{tabular}
    \end{center}
	\caption{The down-sets of Example~\ref{prec_resp_table} along with the relations detailing the order in which they must occur.\label{hdias}}
	\end{table}
    From the Hasse diagrams in Table~\ref{hdias} we can see that only $\oppo_\Sigma$ is not antisymmetric, 
	and so we use Proposition~\ref{prec_resp_possim} to find that 
	$$\PossIm(D) = \{ I_1, I_2, I_3, I_4, I_5 \} = \{ \Emptyset, \{a\}, \{a,b\}, \{a,c\}, \{a,b,c\} \}.$$
\end{example}

Now if $I \in \PossIm(D)$, then it is the image of at least one trace of $D$, and the order in which activities can occur in these traces is bounded by $\oppo_I$. 
This means that arranging $I$ in all possible ways that the constraint set allows turns out to be equivalent to generating the set $\LE((I, \oppo_I))$.
This leads to the following classification of $\Traces(D)$.

\begin{theorem}\label{prec_resp_traces}
    Recall $\ior$ and $\oppo_I$ from Definitions~\ref{ior} and~\ref{oppo}, respectively.
    If $D$ is a precedence-and-response-only declarative process, then
    $$\Traces(D) = \displaystyle\bigcup_{I \in \PossIm(D)} \LE((I, \oppo_I)),$$
    where $\PossIm(D)$ is the set of all down-sets $I$ of $(\Sigma, \ior)$ such that $\oppo_I$ is antisymmetric.
\end{theorem}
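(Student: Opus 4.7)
The plan is to prove the two set inclusions separately, relying heavily on the groundwork laid by Lemmas~\ref{ior_lemma},~\ref{opr_lemma},~\ref{LE_is_trace} and Proposition~\ref{prec_resp_possim}. The characterization of $\PossIm(D)$ given in the theorem statement is immediate from Proposition~\ref{prec_resp_possim}, so the real content is the equality of the trace set with the displayed union.

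For the reverse inclusion, I would start with an arbitrary $\tau \in \LE((I, \oppo_I))$ for some $I \in \PossIm(D)$. Since $I \in \PossIm(D)$, Proposition~\ref{prec_resp_possim} guarantees that $\oppo_I$ is antisymmetric, so $(I, \oppo_I)$ is genuinely a poset and the linear extension $\tau$ makes sense. Lemma~\ref{LE_is_trace} then immediately yields $\tau \in \Traces(D)$, so this direction is essentially a one-line invocation.

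For the forward inclusion, I take an arbitrary $\tau \in \Traces(D)$ and set $I := \im(\tau)$. By Proposition~\ref{prec_resp_possim}, $I \in \PossIm(D)$, so I just need to show $\tau \in \LE((I, \oppo_I))$. Since $\im(\tau) = I$, the sequence $\tau$ is a total ordering of the underlying set of the poset $(I, \oppo_I)$; to be a linear extension I must verify that whenever $a \opponeq_I b$ with $a, b \in I$, the element $a$ appears before $b$ in $\tau$. This is exactly the statement of Lemma~\ref{opr_lemma}, so the verification is immediate.

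There is no serious obstacle — all the technical content (the subtlety about restricting $\opr$ to $I$ before taking transitive closure, the preservation of Property~\ref{opr_property} along chains, and the fact that the constraint-satisfaction conditions can be repackaged as being a linear extension of $(I, \oppo_I)$) has already been absorbed into the preliminary lemmas. The only thing to be careful about is to cover the trivial case $I = \Emptyset$, where $\tau = \epsilon$ is vacuously a linear extension of the empty poset and vacuously a trace; this is already handled inside Lemma~\ref{LE_is_trace}, so no extra work is needed. The proof will therefore read as a short two-paragraph assembly of the existing machinery.
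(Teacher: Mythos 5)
Your proposal is correct and follows essentially the same route as the paper's own proof: the reverse inclusion is a direct application of Lemma~\ref{LE_is_trace} via Proposition~\ref{prec_resp_possim}, and the forward inclusion combines Lemma~\ref{ior_lemma} (or, equivalently, the definition of $\PossIm(D)$) with Lemma~\ref{opr_lemma} to show a trace is a linear extension of $(\im(\tau), \oppo_{\im(\tau)})$. The only cosmetic remark is that $I = \im(\tau) \in \PossIm(D)$ holds by the definition of $\PossIm(D)$ rather than by Proposition~\ref{prec_resp_possim}, which is instead what you need to know that $I$ is a down-set and $\oppo_I$ is antisymmetric.
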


\begin{proof}
    The fact that $\PossIm(D)$ is the set of all down-sets $I$ of $(\Sigma, \ior)$ such that $\oppo_I$ is antisymmetric is proven in Proposition~\ref{prec_resp_possim}.
	Now let $\tau \in \Traces(D)$. 
    By Lemma~\ref{ior_lemma}, $\im(\tau) = I$ for some down-set $I$ of $(\Sigma, \ior)$.
    We have that $I \in \PossIm(D)$ so $\oppo_I$ is antisymmetric and hence $(I, \oppo_I)$ is a poset.
	Now let $a,b \in I$ such that $a \opponeq_I b$. 
    By Lemma~\ref{opr_lemma}, $a$ must appear before $b$ in $\tau$, that is, $a \prec_\tau b$.
    Hence $\tau \in \LE((I, \oppo_I))$ and so 
	$$\Traces(D) \subseteq \displaystyle\bigcup_{I \in \PossIm(D)} \LE((I, \oppo_I)).$$
	Conversely, let $I \in \PossIm(D)$.
    Then $\oppo_I$ is antisymmetric, so $\LE((I, \oppo_I)) \subseteq \Traces(D)$ by Lemma~\ref{LE_is_trace}.
	Therefore 
	$$\Traces(D) \supseteq \displaystyle\bigcup_{I \in \PossIm(D)} \LE((I, \oppo_I)).\qedhere$$
\end{proof}

\begin{example}
    Applying Theorem~\ref{prec_resp_traces} to the declarative process $D$ from Example~\ref{prec:resp:example},
	we find that 
    $$\Traces(D) = \displaystyle\bigcup_{k=1}^5 \LE((I_k, \preceq_{I_k})) = \{ \epsilon, a, ba, ca, bca, cba \}.$$
	Note the the possible images were derived in Example~\ref{prec_resp_table}.
\end{example}

\section{Implementation discussion}\label{sec:implementation}

In this section, we discuss some aspects of implementing Theorem~\ref{prec_resp_traces}.
To use Theorem~\ref{prec_resp_traces} to calculate $\Traces(D)$ we will first need to generate $\PossIm(D)$, 
and then for each $I \in \PossIm(D)$ generate the linear extensions of each $(I, \oppo_I)$.
We will split up the different challenges of the implementation into subsections.

\subsection{Preorders and posets}
The preorder $\ior$ on $\Sigma$ induces a partial order as follows.
The relation $\sim$ on $\Sigma$ defined by $a \sim b$ if $a \ior b$ and $b \ior a$ can be seen to be an equivalence relation.
Define the relation $\leq$ on $\equivcls{\Sigma}$ by $[a]_\sim \leq [b]_\sim$ if $a \ior b$.
A well-established fact is that $\leq$ is both well defined and a partial order on the set $\equivcls{\Sigma}$.
These, and other properties of the equivalence relation $\sim$, can be found in Schr\"oder~\cite[Prop. 5.2.4]{Schroder} and also Stanley~\cite[Exercises 3.1 and 3.2]{stanley}.
Moreover, the posets $\mathcal{J}((\equivcls{\Sigma}, \leq))$ and $\mathcal{J}((\Sigma, \ior))$ of down-sets ordered by inclusion are isomorphic under the map 
\begin{align*}
\psi & : \mathcal{J}((\equivcls{\Sigma}, \leq)) \mapsto \mathcal{J}((\Sigma, \ior))\\
\psi & (I') := \bigcup_{S \in I'} S.
\end{align*}

Since the only aspect of $(\Sigma, \ior)$ in which we are interested is its down-sets, we may now restrict our attention to the case where $(\Sigma, \ior)$ is a poset. 
If $(\Sigma, \ior)$ is not a poset, then we can just work with the poset $(\equivcls{\Sigma}, \leq)$ whose down-sets are in 1-1 correspondence with those of $(\Sigma, \ior)$. 
Consequently, for the remainder of this section we assume $(\Sigma, \ior)$ to be a poset.

\subsection{Generating $\PossIm(D)$}
To determine $\PossIm(D)$ one could naively generate every single down-set of $(\Sigma, \ior)$ and then check, by using Proposition~\ref{prec_resp_possim}, which of the down-sets belong to $\PossIm(D)$.
However, the set $\PossIm(D)$ turns out to have a property that allows us to avoid the unnecessary generation of many down-sets which would be ultimately discarded by this method.
For a down-set $I$ of a $(\Sigma, \ior)$, define 
\begin{align*}
\mathsf{max}(I) &:= \{ x \in \Sigma ~:~ x \ior y \myimplies x = y \mbox{ for all }y \in \Sigma\}.
\end{align*}
It is well known~\cite[p.282]{stanley} that $\mathsf{max}$ is a bijection from the down-sets to the anti-chains of a poset, 
and also that its inverse is the downward closure operation 
\begin{align*}
\downarrow \! A &:= \{ x \in \Sigma ~:~ x \ior y \text{ for some } y \in A \}.
\end{align*}
Therefore generating $\PossIm(D)$ is tantamount to generating the set 
\begin{align*}
\MaxPossIm(D) &:= \{ \mathsf{max}(I) ~:~ I \in \PossIm(D) \}.
\end{align*}

\begin{lemma}\label{maxpossim}
    Let $D$ be a precedence-and-response-only declarative process.
    Then the pair $(\Sigma, \MaxPossIm(D))$ is an independence system.
\end{lemma}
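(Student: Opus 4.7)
The plan is to verify the two defining conditions of an independence system: non-emptiness of the collection, and closure under taking subsets. Non-emptiness is immediate because the empty word $\epsilon$ is always a trace (every constraint is satisfied vacuously), so $\Emptyset \in \PossIm(D)$ and consequently $\mathsf{max}(\Emptyset) = \Emptyset$ lies in $\MaxPossIm(D)$. The substantive work is the subset-closure property: given $A \in \MaxPossIm(D)$ and $A' \subseteq A$, we must exhibit an $I' \in \PossIm(D)$ with $\mathsf{max}(I') = A'$.

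The natural candidate is $I' := \, \downarrow \! A'$. First I would show $I' \in \PossIm(D)$ by invoking Proposition~\ref{prec_resp_possim}: $I'$ is a down-set of $(\Sigma, \ior)$ by construction, so the only thing to verify is that $\oppo_{I'}$ is antisymmetric. Writing $A = \mathsf{max}(I)$ for some $I \in \PossIm(D)$, I would observe that $\downarrow \! A' \subseteq \, \downarrow \! A = I$ (the last equality uses that $\mathsf{max}$ and $\downarrow$ are mutually inverse bijections between antichains and down-sets). Since $I' \subseteq I$, we get $\opr|_{I'} \subseteq \opr|_I$, and taking reflexive-transitive closures yields $\oppo_{I'} \subseteq \oppo_I$. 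Antisymmetry of $\oppo_I$, which holds because $I \in \PossIm(D)$, therefore transfers to $\oppo_{I'}$.

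It remains to check $\mathsf{max}(I') = A'$. Because $A' \subseteq A$ is contained in an antichain it is itself an antichain, and applying the bijection between antichains and down-sets given by $\mathsf{max}$ and $\downarrow$ then gives $\mathsf{max}(\downarrow \! A') = A'$. Combining the two parts, $A' \in \MaxPossIm(D)$, completing the closure argument.

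I do not expect any serious obstacle: the whole proof is a bookkeeping exercise built around two standard facts, namely the $\mathsf{max}$/$\downarrow$ bijection and the monotonicity of reflexive-transitive closure under restriction. The only subtlety worth flagging is making explicit that antisymmetry of $\oppo_{I'}$ comes for free from containment in $I$, rather than from any intrinsic analysis of $I'$; this leans on the earlier warning (made in the example preceding Lemma~\ref{LE_is_trace}) that $\oppo$ is defined by restricting $\opr$ \emph{before} taking transitive closure, which is exactly what makes the monotonicity $I' \subseteq I \Rightarrow \oppo_{I'} \subseteq \oppo_I$ hold.
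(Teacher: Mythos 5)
Your proof is correct and follows essentially the same route as the paper's: define the down-set $\downarrow\! A'$, use $\downarrow\! A' \subseteq \,\downarrow\! A = I$ to get $\oppo_{\downarrow A'} \,\subseteq\, \oppo_I$, and transfer antisymmetry (the paper phrases this step contrapositively, you phrase it directly). Your explicit checks of non-emptiness and of $\mathsf{max}(\downarrow\! A') = A'$ are details the paper leaves implicit, but they do not change the argument.
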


\begin{proof}
    Let $A \in \MaxPossIm(D)$. Then $A = \mathsf{max}(I)$ for some $I \in \PossIm(D)$.
    Let $B \subseteq A$ and define $J = \, \downarrow \! B$. 
    Clearly $J \subseteq I$ and $J$ is a down-set of $(\Sigma, \ior)$. 
    Suppose for a contradiction that $J \notin \PossIm(D)$.
    By Proposition~\ref{prec_resp_possim}, we must have that $\preceq_J$ is not antisymmetric.
    However, 
	\begin{align*}
	(\preceq_J) &= (\opr|_J)^\oplus \subseteq (\opr|_I)^\oplus = (\oppo_I),
	\end{align*} 
	so $\oppo_I$ is also not antisymmetric, contradicting $I \in \PossIm(D)$.
    Hence $J \in \PossIm(D)$ and so $B \in \MaxPossIm(D)$.
\end{proof}

If we have an algorithm which indicates whether or not a given set is independent, 
then the elements of an independence system can be enumerated efficiently using a lexicographical tree approach, as is illustrated in the following example.

\begin{example}
    Consider the collection $\mathcal{I}$ of all subsets of $[4]$ whose sum does not exceed $5$. 
    It is not hard to see that $([4], \mathcal{I})$ is an independence system.
    Given a subset of $[4]$, we can straightforwardly check whether it is an independent set.
    Let us enumerate all independent sets using a lexicographical tree.
    \begin{center}
        \begin{tikzpicture}
        [
            level 1/.style={sibling distance=25mm},
            level 2/.style={sibling distance=10mm},
            level 3/.style={sibling distance=7mm},
        ]
        	\node {Root}
        		child {
                        node {1}
                        child { 
                            node {2}
                            child {node {\red{3}}}
                            child {node {\red{4}}}  
                        }
                        child {
                            node {3}
                            child {node {\red{4}}}
                        }
                        child {node {4}}
                    }
        		child {
        		    node {2}
        		    child {
                            node {3}
                            child {node {\red{4}}}
                        }
                        child {node {\red{4}}}
        		}
                    child {
        		    node {3}
        		    child {node {\red{4}}}
                    }
                    child {
                        node {4}
                    };
        \end{tikzpicture}
    \end{center}
    Each independent set corresponds to a (non-red) node of the tree by considering the path from the root to that node. 
    If a node corresponds to a set which we determine not to be independent, then we delete it (coloured red above).
    Otherwise, we create the children of the node, which are just those elements of $[4]$ which succeed it in the total ordering.
\end{example}

To enumerate the elements of $\MaxPossIm(D)$ (i.e. the independent sets) using a lexicographical tree, the underlying set $\Sigma$ must be totally ordered,
and so we simply assign it an arbitrary total ordering.
Let $A$ be the set of nodes on the path from the root to a node $x$ of the tree. 
The following algorithm indicates whether or not $A$ is independent, given that $A \backslash \{x\}$ is independent.
\begin{enumerate}
    \item[IC1] Ensure that $x$ is incomparable with all other elements of $A$. 
    If it is not, then $A$ is not an anti-chain of $(\Sigma, \ior)$ and can therefore not be independent, so we are done.
    \item[IC2] Otherwise, take the downward closure of $A$ to obtain a down-set $I = \,\downarrow \! A$ of $(\Sigma, \ior)$.
    \item[IC3] Restrict $\opr$ to $I$, and take the reflexive and transitive closure to obtain $\oppo_I = (\opr|_I)^\oplus$.
    \item[IC4] Check whether $\oppo_I$ is antisymmetric. If it is, then $A$ is independent and $I \in \PossIm(D)$.
\end{enumerate}

The most expensive step of this algorithm is IC3, taking the transitive closure of a relation on the set $|I|$.
This can be accomplished using the Floyd-Warshall algorithm~\cite{floyd,roy,warshall}, which has time complexity $\Oh(|I|^3)$. 
Consequently the entire algorithm has time complexity $\Oh(n^3)$ where $n = |\Sigma|$.
Given that one can determine in constant time whether a set is independent, 
one can enumerate the independent sets of an independence system $(E, \mathcal{I})$ in $\Oh(|E|\cdot |\mathcal{I}|)$ time using a lexicographical tree approach~\cite{lawler_etal}.
As a result of this, the time complexity for enumerating $\PossIm(D)$ is $\Oh(n^4|\PossIm(D)|)$.

\subsection{Generating the linear extensions}
For every down-set $I \in \PossIm(D)$ we can now use Pruesse and Ruskey's algorithm~\cite{pruesse_and_ruskey} to generate the set of linear extensions $\LE((I, \oppo_I))$.
Note that in the process of generating $\PossIm(D)$, we have already found every $\oppo_I$ that we need.
The Pruesse and Ruskey algorithm generates the set $\LE(P)$ with time complexity $\mathcal{O}(e(P))$ where $e(P)$ is the number of linear extensions of a poset $P$.

\subsection{Further remarks}
The above discussion outlines how standard algorithms in the literature, for which the complexity is well-known, can be used to generate the set $\Traces(D)$ from the expression given in Theorem~\ref{prec_resp_traces}. 
We would like to note, however, that an optimal trace generating algorithm is not a main goal of this paper. 
Instead, our focus is presenting the set of traces in a form in which each trace appears in a unique way and allows us to appreciate the discrete structure underpinning the trace sets.
Theorem~\ref{prec_resp_traces} shows us that every trace can be written as a unique pair $(I,\pi)$ where $I$ is a down-set of the relation $\ior$ and $\pi$ is a linear extension of the poset $(I,\oppo_I)$.

A motivating factor for doing this is the calculation of stakeholder utility metrics for declarative processes that appeared in the papers \cite{fdap,stakeholder}.
These stakeholder utility metrics require one to calculate the number of traces in a declarative process, and then determine the number of such traces that satisfy additional stakeholder constraints -- one for each such stakeholder.
In order to analyse such processes and make judgments about what sets of stakeholder constraints might `maximize' such a utility measure, being able to specify those activity sets that produce the largest set of linear extensions is of benefit.

\section{Special Cases}\label{sec:special_cases}

In this section we will consider declarative processes that consist only of a single type of constraint. 
We will discuss said properties and what makes them interesting, and use them to give specialized classifications, as well as remarks on their use.

\subsection{Precedence-only constraint sets}

Consider a declarative process $D = (\Sigma, \Const)$ where $\Const$ consists only of precedence constraints. 
We refer to such a declarative process as a \textit{precedence-only} declarative process.
In this case, it follows from Definition~\ref{ior} that $\ior$ is the relation $\{(a,b) \in \Sigma^2 : \Prec(a,b) \in \Const \}^\oplus$ on $\Sigma$ and from Definition~\ref{opr} that $\opr$ is the relation $\{(a,b) \in \Sigma^2 : \Prec(a,b) \in \Const \}$ on $\Sigma$.
Notice that $\ior$ is the same as $(\opr)^\oplus$, an important fact that we will make use of.

\begin{example}\label{prec_example}
    Consider the declarative process $D = (\Sigma, \Const)$ where $\Sigma= \{a,b,c,d,e,f\}$ and $\Const=\{ \Prec(a,c), \Prec(b,c), \Prec(c,d), \Prec(d,e), \Prec(e,d), \Prec(d,f) \}$. 
    The Hasse diagram of the corresponding preordered set $(\Sigma, \ior)$ is given in Figure~\ref{fig:prec_hasse}.
    
    \begin{figure}[ht]
        \centering
        \begin{tikzpicture}
            \node (f) at (0,3) {$f$};
            \node (e) at (1,2) {$e$};
            \node (d) at (0,2) {$d$};
            \node (c) at (0,1) {$c$};
            \node (b) at (0.7,0) {$b$};
            \node (a) at (-0.7,0) {$a$};
            \draw[->] (a) -- (c);
            \draw[->] (b) -- (c);
            \draw[->] (c) -- (d);
            \draw[->] (d) to [out=30,in=150] (e);
            \draw[->] (e) to [out=210,in=330] (d);
            \draw[->] (d) -- (f);
        \end{tikzpicture}
        \caption{Hasse diagram of the preordered set from Example~\protect{\ref{prec_example}}}
        \label{fig:prec_hasse}
    \end{figure}
\end{example}

As before, for each down-set $I$ of $(\Sigma, \ior)$ we define $\oppo_I$ to be the relation $(\opr|_I)^\oplus$ on $I$. 
The following lemma demonstrates that, unlike in the precedence-and-response-only case, we may equivalently take the transitive closure before restricting to $I$ in the definition of $\oppo_I$. 

\begin{lemma}\label{prec_lemma}
    Let $D = (\Sigma, \Const)$ be a precedence-only declarative process.
    If $I$ is a down-set of $(\Sigma, \ior)$, then $\oppo_I \,\, := (\opr|_I)^\oplus = (\opr\!\vphantom{=}^\oplus)|_I$.
    Furthermore, $\oppo_I \,\, = \ior|_I$.
\end{lemma}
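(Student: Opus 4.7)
The plan is to verify the two claimed equalities in succession, noting that the second follows easily once the first is in hand. Since the process is precedence-only, by definition $\ior\,=\,\{(a,b)\in\Sigma^2:\Prec(a,b)\in\Const\}^\oplus = (\opr)^\oplus$, which will be the key identity exploited throughout.

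For the first claim, I would argue set-theoretic double inclusion. The inclusion $(\opr|_I)^\oplus \subseteq (\opr^\oplus)|_I$ is the routine direction: restricting a relation to $I$ can only shrink it, and closure is monotone, so $(\opr|_I)^\oplus \subseteq (\opr^\oplus)$; and since both sides of the relation $(\opr|_I)^\oplus$ already live in $I$, this closure sits inside $(\opr^\oplus)|_I$. The substantive direction is $(\opr^\oplus)|_I \subseteq (\opr|_I)^\oplus$, and this is where the down-set hypothesis earns its keep. Suppose $(a,b) \in (\opr^\oplus)|_I$, so $a,b \in I$. If $a = b$ the pair is in the reflexive closure $(\opr|_I)^\oplus$, so assume otherwise and fix a chain $a = a_0 \opr a_1 \opr \cdots \opr a_m = b$ witnessing $a\,\opr^\oplus b$. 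For each intermediate index $i$, we have $a_i \opr a_{i+1} \opr \cdots \opr a_m = b$, hence $a_i \,\opr^\oplus b = \ior b$. Because $I$ is a down-set of $(\Sigma, \ior)$ and $b \in I$, this forces $a_i \in I$. Thus the entire chain lies in $I$, which means $a_i\,(\opr|_I)\,a_{i+1}$ for each $i$, and so $(a,b) \in (\opr|_I)^\oplus$.

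For the second claim, once the first equality is established we simply chain identities: $\oppo_I \,=\,(\opr|_I)^\oplus \,=\,(\opr^\oplus)|_I \,=\, \ior|_I$, the last step using the precedence-only identity $\ior \,=\, \opr^\oplus$.

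The only real obstacle is the nontrivial inclusion in the first equality, and the above chain-and-down-set argument handles it cleanly. I would also briefly remark on why the analogous statement fails in the precedence-and-response-only case: there $\ior$ is built by also turning $\Resp(a,b)$ into $b \ior a$, which is opposite to its contribution $a \opr b$ to the order-preserving relation. Consequently an intermediate vertex $a_i$ on an $\opr$-chain need not satisfy $a_i \ior b$, the down-set argument breaks, and this is precisely the subtlety flagged after Definition~\ref{oppo}.
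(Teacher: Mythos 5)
Your proof is correct and follows essentially the same route as the paper's: the easy inclusion by monotonicity of closure, and the substantive inclusion by showing each intermediate element $a_i$ of an $\opr$-chain satisfies $a_i \ior b$ (using $\ior\,=\,\opr\!\vphantom{=}^\oplus$ in the precedence-only case) and hence lies in the down-set $I$, after which the second claim follows by chaining identities. The closing remark about why the argument breaks in the precedence-and-response-only case is a nice addition but not needed.
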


\begin{proof}
    Suppose $(a,b) \in (\opr|_I)^\oplus$. We note that $a,b \in I$.
    Since $(a,b)$ is in the transitive closure there must exist $a_1, \dots a_m \in I$ such that $a (\opr\!|_I) a_1 (\opr\!|_I) \dots (\opr\!|_I) a_m (\opr\!|_I) b$ which implies $a \opr a_1 \opr \dots \opr a_m \opr b$.
    This means that $a (\opr\!\vphantom{=}^\oplus) b$ and so $a (\opr\!\vphantom{=}^\oplus)|_I b$.

	Conversely, suppose $(a,b) \in (\opr\!\vphantom{=}^\oplus)|_I$.
    Again, we note that this means $a,b \in I$.
    As $ a (\opr\!\vphantom{=}^\oplus) b$ there must exist $a_1, \dots a_m \in \Sigma$ such that $a \opr a_1 \opr \dots \opr a_m \opr b$.
    This implies $a_i (\opr\!\vphantom{=}^\oplus) b$ for all $ i \in [m]$, which in turn implies $a_i \ior b$  for all $ i \in [m]$, since $\opr\!\vphantom{=}^\oplus = \ior$ in the precedence-only case.
    Now since $I$ is a down-set that contains $b$ and $a_i \ior b$ for all $i\in [m]$, we have $a_i \in I$ again for all such $ i$.
    This allows us to write 
    $$a (\opr|_I) a_1 (\opr|_I) \dots (\opr|_I) a_m (\opr|_I) b,$$
    i.e. $a (\opr|_I)^\oplus b$.
    The second statement now follows immediately from the fact that $\opr\!\vphantom{=}^\oplus = \ior$ in the precedence-only case.
\end{proof}

Since $\oppo_I \,\, = \ior|_I$ for a given down-set $I$, the counterexamples to antisymmetry in $\ior$ are the only possible contenders for counterexamples to antisymmetry in $\oppo_I$, whether or not $I$ belongs to $\PossIm(D)$ is determined by whether or not any of said counterexamples belong to $I$.
As a consequence of this, removing from $\Sigma$ all elements which constitute counterexamples to the antisymmetry of $\ior$, as well as everything above these elements, leaves us with a maximum element $\Imax$ of $\PossIm(D)$ with useful properties that we now prove.

\begin{proposition}\label{prec_possim}
    Let $D = (\Sigma, \Const)$ be a precedence-only declarative process and define $\Imax$ to be the set $\Sigma \backslash \{ a \in \Sigma : b \ior c \ior b \ior a$ for some distinct $b,c \in \Sigma \}$. 
    Then
	\begin{enumerate}
        \item[(i)] $(\Imax, \ior|_\Imax)$ is a poset,
	\item[(i)] $\PossIm(D)$ is the set of down-sets of $(\Imax, \ior|_\Imax)$, and 
	\item[(ii)] $(I, \oppo_I)$ is an induced subposet of $(\Imax, \ior|_\Imax)$ for each $I \in \PossIm(D)$.
	\end{enumerate}
\end{proposition}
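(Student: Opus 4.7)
The plan is to prove the three statements in order, leaning on Lemma~\ref{prec_lemma} (which identifies $\oppo_I$ with $\ior|_I$ in the precedence-only setting) and Proposition~\ref{prec_resp_possim} (which characterises $\PossIm(D)$ by antisymmetry of $\oppo_I$). The key observation underlying the definition of $\Imax$ is that its removed elements are precisely those sitting above a two-cycle of $\ior$, so any down-set containing such an element must also contain the whole cycle, which breaks antisymmetry.

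For the first statement, I argue by contradiction. Suppose $x,y \in \Imax$ are distinct with $x \ior y$ and $y \ior x$. Taking $b = x$ and $c = y$ in the defining condition of $\Imax$, and appending $x \ior x$ (by reflexivity), gives $x \ior y \ior x \ior x$, so $x$ satisfies the exclusion criterion, contradicting $x \in \Imax$. Reflexivity and transitivity of $\ior$ descend to $\ior|_\Imax$ automatically, so this yields the partial order.

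For the second statement, by Proposition~\ref{prec_resp_possim} and Lemma~\ref{prec_lemma} the set $\PossIm(D)$ equals the collection of down-sets $I$ of $(\Sigma,\ior)$ for which $\ior|_I$ is antisymmetric, and it suffices to show this collection coincides with the down-sets of $(\Imax,\ior|_\Imax)$. For the forward direction, if such an $I$ contained some $a \in \Sigma\setminus\Imax$, witnessed by distinct $b',c'$ with $b' \ior c' \ior b' \ior a$, the down-set property of $I$ in $(\Sigma,\ior)$ would force $b',c' \in I$, breaking antisymmetry of $\ior|_I$; hence $I \subseteq \Imax$, and being a down-set transfers verbatim to $(\Imax,\ior|_\Imax)$. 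For the reverse direction, let $I$ be a down-set of $(\Imax,\ior|_\Imax)$; antisymmetry of $\ior|_I$ is inherited from the first statement, and to see that $I$ is a down-set of $(\Sigma,\ior)$ it suffices to check that any $a \in \Sigma$ with $a \ior b$ for some $b \in I$ actually lies in $\Imax$. If not, distinct $b',c'$ witness $b' \ior c' \ior b' \ior a$, and transitivity upgrades this to $b' \ior c' \ior b' \ior b$, contradicting $b \in I \subseteq \Imax$.

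The third statement is then immediate: for any $I \in \PossIm(D)$, Lemma~\ref{prec_lemma} gives $\oppo_I \,=\, \ior|_I$, and since $I \subseteq \Imax$ by the second statement this equals $(\ior|_\Imax)|_I$, which is exactly the defining equation for $(I,\oppo_I)$ to be an induced subposet of $(\Imax,\ior|_\Imax)$. I expect the main obstacle to be the reverse direction of the second statement: the transitivity-driven propagation of a two-cycle witness sitting below $a$ up to one sitting below $b$ is the key subtlety that shows $\Imax$ is the right object to consider, and it is where the precedence-only hypothesis (through $\opr^\oplus = \ior$) is genuinely used.
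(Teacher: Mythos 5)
Your proposal is correct and follows essentially the same route as the paper's proof: identify $\oppo_I$ with $\ior|_I$ via Lemma~\ref{prec_lemma}, invoke Proposition~\ref{prec_resp_possim}, and use transitivity to propagate a two-cycle witness below $a$ up to one below $b$ in the reverse direction of the second statement. Your handling of antisymmetry in that reverse direction (inheriting it directly from statement (i) since $I \subseteq \Imax$) is marginally more streamlined than the paper's contradiction argument, but the substance is identical.
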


\begin{proof}
    (i) This follows immediately from the fact that $\Imax$ contains no counterexamples to antisymmetry, by definition.
    
    (ii) Let $I \in \PossIm(D)$.
    We have that $I$ is a down-set of $(\Sigma, \ior)$ by Proposition~\ref{prec_resp_possim}. 
    Let $a \in I$ and suppose for a contradiction that $a \notin \Imax$. 
    Then there exist distinct $b,c \in \Sigma$ such that $b \ior c \ior b \ior a$.
    Now $b,c \in I$ since $I$ is a down-set of $(\Sigma, \ior)$, so $b (\ior|_I) c (\ior|_I) b$. 
    By Lemma~\ref{prec_lemma}, we have $b \oppo_I c \oppo_I b$.
    Hence $\oppo_I$ is not antisymmetric, contradicting Proposition~\ref{prec_resp_possim}.
    Therefore $I \subseteq \Imax$. 
    Now let $b \in I$ and $a (\ior|_\Imax) b$.
    As $I$ is a down-set of $(\Sigma, \ior)$ we have that $a \in I$.
    Thus $I$ is a down-set of $(\Imax, \ior|_\Imax)$.

    Now let $I$ be a down-set of $(\Imax, \ior|_\Imax)$. 
    Let $x \in I \subseteq \Imax$ and $a \ior x$ and suppose for a contradiction that $a \notin \Imax$.
    Then there exist distinct $b,c \in \Sigma$ such that $b \ior c \ior b \ior a \ior x$, contradicting $x \in \Imax$.
    It follows that $a (\ior|_M) x$.
    Since $I$ is a down-set of $(\Imax, \ior|_\Imax)$, we have that $a \in I$.
    Therefore $I$ is a down-set of $(\Sigma, \ior)$.
    Now suppose for a contradiction that $I \notin \PossIm(D)$.
    By Proposition~\ref{prec_resp_possim}, $\oppo_I$ must not be antisymmetric, so there exist distinct $b,c \in I$ such that $b \oppo_I c \oppo_I b$. 
    By Lemma~\ref{prec_lemma}, we have that $b (\ior|_I) c (\ior|_I) b$. 
    Now we have $b \in \Imax$ and $b \ior c \ior b \ior b$, contradicting the definition of $\Imax$.

    (iii) Let $I \in \PossIm(D)$ and let $a,b \in I$. 
    Then $I \subseteq \Imax$ and 
    $$a \oppo_I b \iff a \ (\opr\!\vphantom{=}^\oplus)|_I \ b \iff a \ (\opr\!\vphantom{=}^\oplus)|_{\Imax} \ b \iff a (\ior|_\Imax) b.$$ 
\end{proof}

\begin{example}\label{example:five:four}
    Consider again the declarative process $D$ from Example~\ref{prec_example}. 
    Let us examine the Hasse diagram of $(\Sigma, \ior)$ in Figure~\ref{fig:prec_hasse} in order to determine $\Imax$.
    In the context of Hasse diagrams, $\Imax := \Sigma \backslash \{ a \in \Sigma ~:~ b \ior c \ior b \ior a$ for some distinct $b,c \in \Sigma \}$ is just $\Sigma$ without those elements which are part of, or above, a directed cycle. 
    In our example, this is readily seen to be the set $\{a,b,c\}$.
    The down-sets of $(\Imax, \ior|_\Imax)$ are $\Emptyset, \{a\}, \{b\}, \{a,b\}$ and $\{a,b,c\}$ so by Proposition~\ref{prec_possim}, these are the sets which constitute $\PossIm(D)$.
\end{example}

Recall that, by Lemma~\ref{maxpossim}, the pair $(\Sigma, \MaxPossIm(D))$ is an independence system, where $\MaxPossIm(D) := \{ \mathsf{max}(I) : I \in \PossIm(D) \}$.
To further justify our interest in this special case, we prove the following specialization of Lemma~\ref{maxpossim}.

\begin{lemma}\label{prec_maxpossim}
    If $D$ is a precedence-only declarative process, then $\MaxPossIm(D) = \mathcal{P}(\mathsf{max}(M))$, where $\mathcal{P}$ denotes the power set.
\end{lemma}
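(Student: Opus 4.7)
My plan is to prove the set equality $\MaxPossIm(D) = \mathcal{P}(\mathsf{max}(M))$ by establishing the two inclusions separately, using Proposition~\ref{prec_possim}(ii) --- which identifies $\PossIm(D)$ with the down-sets of the poset $(M, \ior|_M)$ --- together with the classical bijection between the down-sets and anti-chains of a poset given by $\mathsf{max}$ with inverse $\downarrow$.

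The easier direction should be $\MaxPossIm(D) \supseteq \mathcal{P}(\mathsf{max}(M))$. Given $A \subseteq \mathsf{max}(M)$, observe that $A$ is itself an anti-chain of $(M, \ior|_M)$, being a subset of the anti-chain $\mathsf{max}(M)$. Setting $I = \,\downarrow\! A$, where the downward closure is taken in $(M, \ior|_M)$, the inverse bijection tells us that $I$ is a down-set of $(M, \ior|_M)$ with $\mathsf{max}(I) = A$. Proposition~\ref{prec_possim}(ii) then delivers $I \in \PossIm(D)$, so $A = \mathsf{max}(I) \in \MaxPossIm(D)$.

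The harder direction is $\MaxPossIm(D) \subseteq \mathcal{P}(\mathsf{max}(M))$. Let $A \in \MaxPossIm(D)$ and write $A = \mathsf{max}(I)$ for some down-set $I$ of $(M, \ior|_M)$. The task is to show that every $x \in A$ is maximal in the ambient poset $M$, not merely in $I$. Supposing $x \ior y$ with $y \in M$, I would use Lemma~\ref{prec_lemma}, which supplies $\ior = (\opr)^\oplus$ in the precedence-only setting, to trace an $\opr$-chain from $x$ to $y$; the plan is then to use the down-set property of $I$ together with the precise definition of $M$ to place the chain (or at least its initial portion above $x$) inside $I$, at which point maximality of $x$ in $I$ forces $y = x$.

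The principal obstacle is precisely this upgrade of maximality from $I$ to $M$ in the forward inclusion. This is the step where the absence of response constraints does work that is unavailable in the general precedence-and-response setting of Theorem~\ref{prec_resp_traces}, so a careful use of Lemma~\ref{prec_lemma} and the definition of $\Imax$ will be essential.
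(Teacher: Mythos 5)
You have correctly isolated the forward inclusion $\MaxPossIm(D) \subseteq \mathcal{P}(\mathsf{max}(\Imax))$ as the obstacle, but the step you defer --- upgrading maximality of $x$ from $I$ to $\Imax$ --- cannot be carried out, because that inclusion is false and with it the lemma as stated. Your plan is to use the down-set property of $I$ to place the $\opr$-chain from $x$ up to $y$ inside $I$; but a down-set is closed \emph{downward}, so nothing forces elements lying \emph{above} a maximal element of $I$ into $I$, and neither Lemma~\ref{prec_lemma} nor the definition of $\Imax$ supplies the missing control. The paper's own Example~\ref{prec_example} is a counterexample: there $\Imax = \{a,b,c\}$ with $a$ and $b$ each below $c$, the singleton $\{a\}$ is a down-set of $(\Imax, \ior|_\Imax)$ and hence lies in $\PossIm(D)$ by Proposition~\ref{prec_possim} (indeed $a$ alone is a trace), so $\mathsf{max}(\{a\}) = \{a\} \in \MaxPossIm(D)$; yet $\mathsf{max}(\Imax) = \{c\}$, so $\{a\} \notin \mathcal{P}(\mathsf{max}(\Imax))$. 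What is actually true is that $\MaxPossIm(D)$ is the set of \emph{all} anti-chains of $(\Imax, \ior|_\Imax)$, via the bijection $\mathsf{max}$ (with inverse $\downarrow$) between down-sets and anti-chains applied to Proposition~\ref{prec_possim}; this collapses to $\mathcal{P}(\mathsf{max}(\Imax))$ only when $(\Imax, \ior|_\Imax)$ is itself an anti-chain. Your argument for the reverse inclusion is sound and, pleasingly, is exactly the argument needed for the corrected statement: given any anti-chain $A$ of $(\Imax, \ior|_\Imax)$, the set $\downarrow\! A$ is a down-set with $\mathsf{max}(\downarrow\! A) = A$, hence $A \in \MaxPossIm(D)$.

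For comparison, the paper's proof commits precisely the error you were circling: it deduces $\mathsf{max}(I) \subseteq \mathsf{max}(\Imax)$ from $I \subseteq \Imax$, but $\mathsf{max}$ is not monotone under inclusion (again, $\mathsf{max}(\{a\}) = \{a\} \not\subseteq \{c\} = \mathsf{max}(\{a,b,c\})$ in Example~\ref{prec_example}). So your instinct that this direction required a genuine argument was correct; the conclusion to draw is that no such argument exists, and the lemma should be restated as ``$\MaxPossIm(D)$ is the set of anti-chains of $(\Imax, \ior|_\Imax)$,'' which is still an independence system and so remains consistent with Lemma~\ref{maxpossim}. Note that the subsequent implementation step ``take the downward closure of each $A \subseteq \mathsf{max}(\Imax)$ to obtain $\PossIm(D)$'' inherits the same defect, as it would miss the down-set $\{a\}$ in the example.
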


\begin{proof}
Proposition~\ref{prec_possim} states that $\PossIm(D) = \{ I : I \text{ is a down-set of } (\Imax, \ior|_\Imax) \}$.
    Hence $\MaxPossIm(D) = \{ \mathsf{max}(I) : I \text{ is a down-set of } (\Imax, \ior|_\Imax) \}$.
    If $I$ is a down-set of $(\Imax, \ior|_\Imax)$ then $I \subseteq \Imax$ so $\mathsf{max}(I) \subseteq \mathsf{max}(\Imax)$ and it follows that $\MaxPossIm(D) \subseteq \mathcal{P}(\mathsf{max}(M))$. 
    For the converse, notice that $\Imax$ is itself a down-set of $(\Imax, \ior|_\Imax)$ and hence $\mathsf{max}(\Imax)$ is a member of the set $\MaxPossIm(D)$.
    Since $(\Sigma, \MaxPossIm)$ is an independence system by Lemma~\ref{maxpossim}, every subset of $\mathsf{max}(\Imax)$ is also in $\MaxPossIm(D)$.
\end{proof}

In light of both Theorem~\ref{prec_resp_traces} and Proposition~\ref{prec_possim}, it can be seen that the poset $(\Imax, \ior|_\Imax)$ contains all the information we need in order to describe $\Traces(D)$.
This brings us to the classification of $\Traces(D)$.

\begin{theorem}\label{prec_traces}
    Recall $\ior$ from Definition~\ref{ior}. 
    If $D$ is a precedence-only declarative process, then
    $$\Traces(D) = \displaystyle\bigcup_{I \in \mathcal{J}((\Imax, \ior|_\Imax))} \LE((I, \ior|_I)),$$
    where $\Imax := \Sigma \backslash \{ a \in \Sigma ~:~ b \ior c \ior b \ior a$ for some distinct $b,c \in \Sigma \}$.
\end{theorem}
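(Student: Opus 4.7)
The plan is to derive Theorem~\ref{prec_traces} as a specialization of the more general Theorem~\ref{prec_resp_traces}, using the two precedence-only results already established, namely Lemma~\ref{prec_lemma} and Proposition~\ref{prec_possim}. Since $D$ is a precedence-only declarative process, it is in particular a precedence-and-response-only declarative process, so Theorem~\ref{prec_resp_traces} applies and gives
\[
\Traces(D) = \bigcup_{I \in \PossIm(D)} \LE((I, \oppo_I)).
\]
The strategy is then to rewrite both the index set and the poset inside $\LE$ using the precedence-only machinery.

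First I would handle the index set. Proposition~\ref{prec_possim}(ii) tells us that $\PossIm(D)$ is exactly the set of down-sets of $(\Imax, \ior|_\Imax)$, i.e. $\PossIm(D) = \mathcal{J}((\Imax, \ior|_\Imax))$. This lets us replace the union over $I \in \PossIm(D)$ with the union over $I \in \mathcal{J}((\Imax, \ior|_\Imax))$ directly.

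Next I would handle the inner poset. For any $I \in \PossIm(D)$, Lemma~\ref{prec_lemma} gives $\oppo_I \,\,= \ior|_I$. Hence $(I, \oppo_I)$ and $(I, \ior|_I)$ are the same poset, so their sets of linear extensions coincide: $\LE((I, \oppo_I)) = \LE((I, \ior|_I))$. Substituting these two facts into the expression from Theorem~\ref{prec_resp_traces} yields
\[
\Traces(D) = \bigcup_{I \in \mathcal{J}((\Imax, \ior|_\Imax))} \LE((I, \ior|_I)),
\]
which is exactly what we want.

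There is no real obstacle to this proof, as all the heavy lifting has been done in the earlier results. The only thing requiring some care is ensuring that the equality $\oppo_I \,\,= \ior|_I$ (Lemma~\ref{prec_lemma}) applies to the $I$ appearing in the reindexed union, but this is immediate since the reindexed union runs precisely over those $I$ that are down-sets of $(\Sigma, \ior)$ lying in $\PossIm(D)$, which is the hypothesis under which Lemma~\ref{prec_lemma} was stated.
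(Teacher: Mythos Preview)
Your proposal is correct and follows essentially the same approach as the paper: start from Theorem~\ref{prec_resp_traces}, reindex via Proposition~\ref{prec_possim}(ii), and then identify $(I,\oppo_I)$ with $(I,\ior|_I)$. The only cosmetic difference is that the paper invokes Proposition~\ref{prec_possim}(iii) (the induced-subposet statement) to obtain $\oppo_I \,= \ior|_I$, whereas you cite Lemma~\ref{prec_lemma} directly; since Proposition~\ref{prec_possim}(iii) is itself a consequence of Lemma~\ref{prec_lemma}, your route is if anything slightly more direct.
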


\begin{proof}
    From Theorem~\ref{prec_resp_traces}, we have that
    $$\Traces(D) = \displaystyle\bigcup_{I \in \PossIm(D)} \LE((I, \oppo_I)).$$
    Now by Proposition~\ref{prec_possim} we have that $\PossIm(D) = \mathcal{J}((\Imax, \ior|_\Imax))$, and also that $(I, \oppo_I)$ is an induced subposet of $(\Imax, \ior|_\Imax)$ for each $I \in \PossIm(D)$.
    That is, $(I, \oppo_I) = (I, (\ior|_\Imax)|_I) = (I, \ior|_I)$. 
    The result follows.
\end{proof}

\begin{example}
    Consider again the declarative process $D$ from Example~\ref{prec_example}. 
    We found in Example~\ref{example:five:four} that $\Imax = \{a,b,c\}$ and that the down-sets of $(\Imax, \ior|_\Imax)$ are $\Emptyset, \{a\}, \{b\}, \{a,b\}$ and $\{a,b,c\}$.
    Trivially, we have $\LE((\Emptyset, \ior|_\Emptyset)) = \{\epsilon\}$, $\LE((\{a\}, \ior|_{\{a\}})) = \{a\}$ and $\LE((\{b\}, \ior|_{\{b\}})) = \{b\}$. 
    From Figure~\ref{fig:prec_hasse} we can see that $\LE((\{a,b\}, \ior|_{\{a,b\}})) = \{ab, ba\}$ and $\LE((\{a,b,c\}, \ior|_{\{a,b,c\}})) = \{abc, bac\}$.
    Hence by Theorem~\ref{prec_traces} we have that $\Traces(D) = \{ \epsilon, a, b, ab, ba, abc, bac \}$.
\end{example}

It is well known~\cite[p.295]{stanley} that the linear extensions of a poset $P$ are in bijection with the maximal chains of $\mathcal{J}(P)$. 
These can be seen as paths from $\Emptyset$ to $P$ in the Hasse diagram of $P$. 
Notice that the maximal chains in the down-sets of $\mathcal{J}(P)$ (not to be confused with the elements of $\mathcal{J}(P)$, which are down-sets of $P$) are exactly the saturated chains in $\mathcal{J}(P)$ which include $\Emptyset$.
Theorem~\ref{prec_traces} now reveals a bijection between $\Traces(D)$ and saturated chains in $\mathcal{J}((\Imax, \ior|_\Imax))$ which include $\Emptyset$, or equivalently, directed paths from $\Emptyset$ to any other element in the Hasse diagram of $\mathcal{J}((M, \ior|_\Imax))$.

It is not difficult to see that an implementation of Theorem~\ref{prec_traces} would be simpler than that of the general case, thanks to Lemma~\ref{prec_maxpossim}.
It would only involve the following steps:

\begin{enumerate}
    \item Determine $\Imax$.
    \item Take the downward closure of each $A \subseteq \mathsf{max}(\Imax)$ to obtain $\PossIm(D)$.
    \item For each $I \in \PossIm(D)$, restrict $\ior$ to $I$ and generate the linear extensions of $(I, \ior|_I)$.
\end{enumerate}

It seems that we should be able to determine all of these linear extensions without having to apply a linear extension generating algorithm to each and every poset, 
seeing as how they are all just induced sub-posets of $(\Imax, \ior|_\Imax)$.
This naturally leads us to ask the question; given an induced subposet $Q$ of a poset $P$: can we determine $\LE(Q)$ given $\LE(P)$?

\begin{lemma}\label{induced_subposet_LE}
    Let $P$ be a poset and let $Q$ be an induced subposet of $P$. Then
    $$\LE(Q) ~=~ \{ \pi|_Q ~ : ~ \pi \in \LE(P) \}.$$
\end{lemma}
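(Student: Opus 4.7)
The plan is to prove both containments separately. For the easy containment $\{\pi|_Q : \pi \in \LE(P)\} \subseteq \LE(Q)$, I would take $\pi \in \LE(P)$, observe that $\pi|_Q$ is automatically a total order on $Q$, and check that it respects $\preceq_Q$: if $a \preceq_Q b$ for $a,b \in Q$, then $a \preceq_P b$ since $Q$ is an induced subposet, so $a$ precedes $b$ in $\pi$ and hence in $\pi|_Q$. Thus $\pi|_Q \in \LE(Q)$.

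For the reverse containment $\LE(Q) \subseteq \{\pi|_Q : \pi \in \LE(P)\}$, I would proceed by induction on $n := |P \setminus Q|$. The base case $n=0$ is immediate since then $P = Q$ and each $\sigma \in \LE(Q)$ is trivially its own restriction. For the inductive step, fix $\sigma \in \LE(Q)$, pick any $x \in P \setminus Q$, and let $P' := P \setminus \{x\}$ equipped with the order inherited from $P$. Then $Q$ remains an induced subposet of $P'$ and $|P' \setminus Q| = n-1$, so the inductive hypothesis furnishes some $\pi' \in \LE(P')$ with $\pi'|_Q = \sigma$. It then suffices to insert $x$ into $\pi'$ at some position to produce a linear extension $\pi$ of $P$; since $x \notin Q$, this will automatically yield $\pi|_Q = \pi'|_Q = \sigma$.

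The main obstacle is verifying that such an insertion position always exists. Setting $A = \{y \in P' : y \prec_P x\}$ and $B = \{y \in P' : x \prec_P y\}$, a valid insertion requires every element of $A$ to precede $x$ and every element of $B$ to follow $x$ in $\pi$, so I must show that in $\pi'$ every element of $A$ already precedes every element of $B$. This is where transitivity does the work: for $a \in A$ and $b \in B$ we have $a \prec_P x \prec_P b$, hence $a \prec_{P'} b$, and since $\pi' \in \LE(P')$, $a$ precedes $b$ in $\pi'$. Inserting $x$ immediately after the final $A$-element of $\pi'$ (or at the very start if $A$ is empty) then produces the required $\pi \in \LE(P)$, closing the induction.
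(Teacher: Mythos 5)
Your proof is correct, and the reverse containment is handled by a genuinely different argument from the paper's. The paper takes $\sigma \in \LE(Q)$, forms the relation $(\preceq_P \cup\; \preceq_\sigma)^\oplus$ on $P$, asserts (without proof) that this is again a partial order $P+\sigma$, and then takes any linear extension of $P+\sigma$; such a $\pi$ extends $P$ and restricts to $\sigma$. That route is short but leans on two black boxes: the antisymmetry of the merged relation (the one place where the induced-subposet hypothesis is really doing work) and the existence of a linear extension of a finite poset. Your induction on $|P\setminus Q|$ replaces both with an explicit one-element insertion: the verification that every element of $A=\{y : y\prec_P x\}$ precedes every element of $B=\{y : x\prec_P y\}$ in $\pi'$ is exactly the transitivity argument that, in the paper's version, is hidden inside the claim that $P+\sigma$ has no cycles. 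The trade-off is length versus self-containedness: the paper's proof is two lines given the standard facts, while yours is fully constructive and proves those facts along the way. One small point worth making explicit in a final write-up: after inserting $x$ immediately after the last $A$-element, you should note that every element of $B$ lies strictly after that position precisely because each $b\in B$ follows \emph{every} element of $A$ in $\pi'$ (not just some); you state the right claim, so this is only a matter of spelling out why the chosen position serves both constraints simultaneously.
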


\begin{proof}
    First notice that $\pi|_Q$, the restriction of the sequence $\pi$ to the set $Q$, is equivalent to the totally ordered set $(Q, \preceq_\pi\!\!|_Q)$, since removing elements from the linear extension $\pi$ does not change the order between the remaining elements.
    Again, by abuse of notation, we let $\pi|_Q$ refer to both the sequence and the totally ordered set.

    Let $\sigma \in \LE(Q)$.
    It can be shown that $(P, (\preceq_P \cup \preceq_\sigma)^\oplus)$ is a poset, which we denote by $P + \sigma$. 
    Since $(\preceq_P) \subseteq (\preceq_P \cup \preceq_\sigma)^\oplus$, we have that $\LE(P + \sigma) \subseteq \LE(P)$.
    Now if $\pi \in \LE(P + \sigma)$ then $\pi \in \LE(P)$ and clearly $\pi|_Q = \sigma$.
    
    Conversely, let $\pi \in \LE(P)$.
    Let $a,b \in Q$ such that $a \preceq_Q b$.
    Then $a \preceq_P b$ since $Q$ is an induced subposet of $P$.
    So $a \preceq_\pi b$ since $\pi$ is a linear extension of $P$.
    Finally we have that $a (\preceq_\pi\!\!|_Q) b$ since $a,b \in Q$.
    This shows that $\pi|_Q \in \LE(Q)$.
\end{proof}

Now we need only apply a linear extension generating algorithm to the poset $(\Imax, \ior|_\Imax)$, the remaining $\LE((I, \oppo_I))$ are then easily found using Lemma~\ref{induced_subposet_LE}. 

\begin{example}
    Consider again the declarative process $D$ from Example~\ref{prec_example}.
    We have found previously that $\Imax = \{a,b,c\}$, that the down-sets of $(\Imax, \ior|_\Imax)$ are $\Emptyset, \{a\}, \{b\}, \{a,b\}$ and $\{a,b,c\}$, and that $\LE((\Imax, \ior|_\Imax)) = \{ abc, bac \}$. 
    Consider Theorem~\ref{prec_traces} in the context of our example. 
    Every $(I, \ior|_I)$ is an induced subposet of $(\Imax, \ior|_\Imax)$ since $I \subseteq M$, and so by repeated application of Lemma~\ref{induced_subposet_LE}, we find that 
    \begin{align*}
        \Traces(D) = & \left\{ abc|_\Emptyset, bac|_\Emptyset, abc|_{\{a\}}, bac|_{\{a\}}, abc|_{\{b\}}, bac|_{\{b\}}, abc|_{\{a,b\}}, bac|_{\{a,b\}},  abc|_{\{a,b,c\}}, bac|_{\{a,b,c\}}\right\}\\
                  =  & \{ \epsilon, a, b, ab, ba, abc, bac\}.
    \end{align*}
\end{example}

Although it is true that in practice, the application of Lemma~\ref{induced_subposet_LE} will usually produce a large amount of duplicate sequences, it offers an alternative which only involves one application of a linear extension generating algorithm, should this be preferable.

\subsection{Response-only constraint sets}
The response-only case is almost identical to the precedence-only case. 
In this case, $\ior$ and $\opr\!\vphantom{=}^\oplus$ are each others transpose, rather than being equal.
From here all the results of the previous section hold, save for Lemma~\ref{prec_lemma}, where $\oppo_I$ and $\ior|_I$ are each others transpose rather than equal.

\subsection{Successor-only constraint sets}
Consider a declarative process $D = (\Sigma, \Const)$ where $\Const$ consists only of successor constraints. 
We refer to such a declarative process as a \textit{successor-only} declarative process.
Recall that $\Succ(a,b)$ can be rewritten as $\Resp(a,b) \wedge \Prec(a,b)$, so that this is in fact a special case of the precedence-and-response-only declarative process.
In this case, it follows from Definition~\ref{ior} that $\ior$ is the relation $\{(a,b) \in \Sigma^2 : \Succ(a,b) \in \Const \text{ or } \Succ(b,a) \in \Const \}^\oplus$ on $\Sigma$ and from Definition~\ref{opr} that $\opr$ is the relation $\{(a,b) \in \Sigma^2 : \Succ(a,b) \in \Const \}$ on $\Sigma$.

Notice that $\ior$ is an equivalence relation on $\Sigma$.
We will show that the $\ior$-equivalence classes can be thought of as the basic building blocks of the declarative process, and we will use them to classify $\Traces(D)$.

\begin{lemma}\label{succ_lemma}
    Let $D = (\Sigma, \Const)$ be a successor only declarative process.
    Then $I$ is a down-set of $(\Sigma, \ior)$ if and only if $I$ is a union of elements of $\Sigma / \ior$.
    Furthermore, if $K_1, \dots, K_m \in \Sigma / \ior$ (so that $K_1 \cupdot \dots \cupdot K_m$ is a down-set of $(\Sigma, \ior)$), then $\oppo_{K_1 \cupdot \dots \cupdot K_m} \, = \, \oppo_{K_1} \cupdot \dots \cupdot \oppo_{K_m}$.
\end{lemma}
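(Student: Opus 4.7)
The plan is to exploit the fact that, in the successor-only setting, $\ior$ is an equivalence relation, which forces its down-sets and the relation $\opr$ to have a very rigid ``block-diagonal'' structure. Before starting, I would remark that the symmetry of $\ior$ is immediate from the definitions: each $\Succ(a,b) \in \Const$ unpacks as $\Prec(a,b) \wedge \Resp(a,b)$, and so contributes $(a,b)$ to the base relation of $\ior$ via the $\Prec$ clause and $(b,a)$ via the $\Resp$ clause; reflexivity and transitivity then come for free from the closure operation.

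For the first statement, I would argue directly from the definition of a down-set. A set $I \subseteq \Sigma$ is a down-set of $(\Sigma, \ior)$ iff $b \in I$ and $a \ior b$ imply $a \in I$. Since $\ior$ is an equivalence relation, $a \ior b$ holds precisely when $a$ and $b$ lie in the same $\ior$-class, so this condition reduces to ``$I$ contains the full $\ior$-class of each of its elements'', which is exactly to say that $I$ is a union of elements of $\equivcls{\Sigma}$. The converse direction is immediate: any such union is closed under $\ior$.

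For the second statement, the key observation is that $\opr$ never crosses equivalence classes. Indeed, if $(a,b) \in \opr$ then $\Succ(a,b) \in \Const$, and by the remark above this simultaneously places $(a,b)$ and $(b,a)$ into the base relation of $\ior$, so $a$ and $b$ share an $\ior$-class. Writing $I = K_1 \cupdot \dots \cupdot K_m$, this means that $\opr|_I$ is supported on the disjoint union of the sets $K_j \times K_j$, so it splits as the disjoint union of the restrictions $\opr|_{K_j}$. To finish, I would show that the reflexive and transitive closure respects this decomposition: any chain $a_0 \opr|_I a_1 \opr|_I \dots \opr|_I a_k$ stays inside a single $K_j$ by the class-preservation observation, giving $(\opr|_I)^\oplus \subseteq \oppo_{K_1} \cupdot \dots \cupdot \oppo_{K_m}$, while the reverse inclusion is automatic from $K_j \subseteq I$. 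Reflexivity on all of $I$ matches the combined reflexivity on each $K_j$. I expect the only subtlety to be stating carefully that the closure of a block-diagonal relation remains block-diagonal; the rest is essentially bookkeeping.
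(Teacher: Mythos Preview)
Your proposal is correct and follows essentially the same approach as the paper: both parts hinge on the observation that in the successor-only case $\ior$ is an equivalence relation whose classes are exactly the weakly connected components of $\opr$, so down-sets are unions of classes and $\opr|_I$ is block-diagonal with respect to them. The paper records the second part as a one-line chain of equalities using this block structure, whereas you spell out the chain argument for the closure, but the content is the same.
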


\begin{proof}
    Let $I$ be down-set of $(\Sigma, \ior)$.
    If $a \in I$ and $b \in [a]_{\ior}$ then $b \ior a$ and hence $b \in I$ since $I$ is a down-set of $(\Sigma, \ior)$.
    So $[a]_{\ior} \subseteq I$, which implies that $I$ is a union of elements of $\Sigma / \ior$.

    Let $I$ be a union of elements of $\Sigma / \ior$. 
    Let $a \in I$ and $b \ior a$.
    Then $b \in [a]_{\ior} \subseteq I$ so $I$ is a down-set of $(\Sigma, \ior)$.

    For the second statement, notice that $\ior = \opr \cup \opr\!\vphantom{=}^T$ in the successor-only case, meaning that the $\ior$-equivalence classes also partition $\Sigma$ into its $\opr$-weakly connected components. 
    This implies the following identity given $K_1, \dots, K_m \in \Sigma / \ior$.
    \begin{align*}
        (\oppo_{K_1 \cupdot \dots \cupdot K_m}) 
		&:= (\opr|_{K_1 \cupdot \dots \cupdot K_m})^\oplus \\ 
		& = ((\opr|_{K_1}) \cupdot \dots \cupdot (\opr|_{K_m}))^\oplus \\ & = (\opr|_{K_1})^\oplus \cupdot \dots \cupdot (\opr|_{K_m})^\oplus\\ 
        &=: \, \oppo_{K_1} \cupdot \dots \cupdot \oppo_{K_m}. \qedhere
    \end{align*}
\end{proof}

It follows from the second part of this proposition that, given a down-set $I$ of $(\Sigma, \ior)$, its corresponding order relation $\oppo_I$ is antisymmetric if and only if the order relation corresponding to each of its constituent $\ior$-equivalence classes is antisymmetric. 
But recall that we are only interested in those down-sets whose corresponding order relations are antisymmetric, and hence, those equivalence classes whose corresponding order relations are not antisymmetric are useless to as building blocks, and so we discard them to form the set $\CoreOcc(D)$.

\begin{proposition}\label{succ_possim}
    Let $D = (\Sigma, \Const)$ be a successor-only declarative process, and define $\CoreOcc(D)$ to be the set $\{ K \in \Sigma / \ior : \,\, \oppo_{K} \text{ is antisymmetric} \}$.
    Then $\PossIm(D)$ is the set of all possible unions of the elements of $\CoreOcc(D)$.
\end{proposition}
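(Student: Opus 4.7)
The plan is to combine Proposition~\ref{prec_resp_possim} with the two parts of Lemma~\ref{succ_lemma}. Proposition~\ref{prec_resp_possim} tells us $\PossIm(D)$ is exactly the collection of down-sets $I$ of $(\Sigma,\ior)$ for which $\oppo_I$ is antisymmetric, while the first statement of Lemma~\ref{succ_lemma} identifies the down-sets of $(\Sigma,\ior)$ with the unions of $\ior$-equivalence classes. So the task reduces to showing that for a down-set $I = K_1 \cupdot \cdots \cupdot K_m$ expressed as a disjoint union of elements of $\Sigma/\ior$, antisymmetry of $\oppo_I$ is equivalent to antisymmetry of each $\oppo_{K_i}$.

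To establish this equivalence I would invoke the second statement of Lemma~\ref{succ_lemma}, namely $\oppo_I \,=\, \oppo_{K_1} \cupdot \cdots \cupdot \oppo_{K_m}$. Since the blocks $K_1,\dots,K_m$ are pairwise disjoint, any pair $(a,b)$ with $a \ne b$ witnessing a failure of antisymmetry in $\oppo_I$ (i.e.\ $a \oppo_I b$ and $b \oppo_I a$) must have both $a$ and $b$ lying inside a single $K_i$; conversely, any such failure inside a single $K_i$ is inherited by $\oppo_I$ since each $\oppo_{K_i}$ is a subrelation of $\oppo_I$. This is the only substantive step, and it is essentially a bookkeeping observation about disjoint unions of relations on disjoint sets. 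Hence $\oppo_I$ is antisymmetric if and only if each $\oppo_{K_i}$ is antisymmetric, i.e.\ if and only if each $K_i \in \CoreOcc(D)$.

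Chaining the equivalences: $I \in \PossIm(D)$ iff $I$ is a down-set of $(\Sigma,\ior)$ and $\oppo_I$ is antisymmetric iff $I$ is a union of elements of $\Sigma/\ior$ each of which lies in $\CoreOcc(D)$, which is precisely to say that $I$ is a union of elements of $\CoreOcc(D)$. The only remaining detail is the boundary case of the empty union, corresponding to $I = \Emptyset$; this is a down-set with $\oppo_\Emptyset$ vacuously antisymmetric, so it is included on both sides of the claimed equality. There is no genuine obstacle here, since Proposition~\ref{prec_resp_possim} and Lemma~\ref{succ_lemma} do all of the heavy lifting; the proof amounts to reading off their consequences.
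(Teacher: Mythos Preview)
Your proposal is correct and follows essentially the same approach as the paper: both arguments combine Proposition~\ref{prec_resp_possim} with the two parts of Lemma~\ref{succ_lemma}, reducing the claim to the observation that a disjoint union of relations on disjoint sets is antisymmetric if and only if each piece is. Your explicit mention of the empty union is a nice touch the paper leaves implicit, but otherwise the two proofs are the same.
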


\begin{proof}
    Let $I \in \PossIm(D)$. 
    Then $I$ is a down-set of $(\Sigma, \ior)$ and $\oppo_I$ is antisymmetric by Proposition~\ref{prec_resp_possim}.
    Hence $I = K_1 \cupdot \dots \cupdot K_m$ for some $K_1, \dots, K_m \in \Sigma / \ior$ and $\oppo_I \,\, = \, \oppo_{K_1} \cupdot \dots \cupdot \oppo_{K_m}$ by Lemma~\ref{succ_lemma}. 
    Now $\oppo_{K_1}, \dots, \oppo_{K_m}$ must all be antisymmetric since $\oppo_I$ is antisymmetric. 
    Hence $K_1, \dots, K_m \in \CoreOcc(D)$.

    Let $I = K_1 \cupdot \dots \cupdot K_m$ for some $K_1, \dots, K_m \in \CoreOcc(D)\subseteq \Sigma / \ior$. 
    Then $I$ is a down-set of $(\Sigma, \ior)$ and $\oppo_I \,\, = \, \oppo_{K_1} \cupdot \dots \cupdot \oppo_{K_m}$ by Lemma~\ref{succ_lemma}.
    Now $\oppo_I$ must be antisymmetric since $\oppo_{K_1}, \dots, \oppo_{K_m}$ are all antisymmetric. 
    Hence $I \in \PossIm(D)$ by Proposition~\ref{prec_resp_possim}.
\end{proof}

This brings us to our classification of $\Traces(D)$.

\begin{theorem}\label{succ_traces}
    Recall that in the successor only case, $\ior$ and $\opr$ are the relations 
	\begin{align*}
	\ior & = \{(a,b) \in \Sigma^2 : \Succ(a,b) \in \Const \text{ or } \Succ(b,a) \in \Const \}^\oplus, \\
	\opr &= \{(a,b) \in \Sigma^2 : \Succ(a,b) \in \Const \}.
	\end{align*}
    Also recall $\oppo_I$ from Definition~\ref{oppo}.
    If $D$ is a successor-only declarative process, then
    $$\Traces(D) = \displaystyle\bigcup_{S \subseteq \CoreOcc(D)} \LE \left( \left( \displaystyle\bigcupdot_{K \in S} K, \, \displaystyle\bigcupdot_{K \in S} \oppo_K \right) \right),$$
    where $\CoreOcc(D) = \{ K \in \Sigma / \ior  ~:~ \oppo_K \text{ is antisymmetric} \}$.
\end{theorem}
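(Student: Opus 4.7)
The plan is to chain together the three results already established: Theorem~\ref{prec_resp_traces} for the general precedence-and-response-only case, Proposition~\ref{succ_possim} describing $\PossIm(D)$ as unions of $\CoreOcc(D)$-classes, and the decomposition of $\oppo_I$ furnished by Lemma~\ref{succ_lemma}. Since a successor-only process is a special case of a precedence-and-response-only process, Theorem~\ref{prec_resp_traces} applies directly and gives
\[
\Traces(D) \,=\, \bigcup_{I \in \PossIm(D)} \LE((I, \oppo_I)).
\]

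Next, I would rewrite the index set. Proposition~\ref{succ_possim} says that every $I \in \PossIm(D)$ has the form $I = \bigcupdot_{K \in S} K$ for some $S \subseteq \CoreOcc(D)$, and conversely every such union lies in $\PossIm(D)$. Because the elements of $\CoreOcc(D) \subseteq \Sigma/{\ior}$ are pairwise disjoint, the assignment $S \mapsto \bigcupdot_{K \in S} K$ is a bijection from $\mathcal{P}(\CoreOcc(D))$ to $\PossIm(D)$, so the union in Theorem~\ref{prec_resp_traces} can be re-indexed over $S \subseteq \CoreOcc(D)$ without repetition or omission.

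Finally, I would substitute the explicit form of $\oppo_I$. For $I = \bigcupdot_{K \in S} K$, the second part of Lemma~\ref{succ_lemma} gives $\oppo_I \,=\, \bigcupdot_{K \in S} \oppo_K$. Plugging this into the re-indexed union yields exactly
\[
\Traces(D) \,=\, \bigcup_{S \subseteq \CoreOcc(D)} \LE\!\left(\!\left(\bigcupdot_{K \in S} K,\; \bigcupdot_{K \in S} \oppo_K\right)\!\right),
\]
which is the desired identity.

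The proof is essentially a bookkeeping argument and there is no serious obstacle; the only step that requires a moment of care is checking that $\mathcal{P}(\CoreOcc(D))$ indexes $\PossIm(D)$ bijectively, which follows from disjointness of equivalence classes (so that the empty union, corresponding to $S = \Emptyset$, gives $I = \Emptyset \in \PossIm(D)$ and accounts for the empty trace $\epsilon$). Everything else is a direct appeal to the three cited results.
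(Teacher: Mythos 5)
Your proposal is correct and follows essentially the same route as the paper: apply Theorem~\ref{prec_resp_traces}, re-index the union over $\PossIm(D)$ via Proposition~\ref{succ_possim}, and substitute $\oppo_I = \bigcupdot_{K \in S} \oppo_K$ from Lemma~\ref{succ_lemma}. The extra remark about bijectivity of $S \mapsto \bigcupdot_{K \in S} K$ is harmless but not needed, since for a union of sets only the range of the indexing matters, not its injectivity.
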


\begin{proof}
    From Theorem~\ref{prec_resp_traces}, we have that
    $$\Traces(D) = \displaystyle\bigcup_{I \in \PossIm(D)} \LE((I, \oppo_I)).$$
    Proposition~\ref{succ_possim} can be rewritten as $\PossIm(D) = \{ \bigcupdot_{K \in S} K : S \subseteq \CoreOcc(D) \}$ and Lemma~\ref{succ_lemma} implies that if $I = \bigcupdot_{K \in S} K \in \PossIm(D)$, then $\oppo_I = \bigcupdot_{K \in S} \oppo_K$. 
    The result follows.
\end{proof}

As with the precedence-only case, an implementation of this theorem would be simpler than that of the general case.
It would only involve the following steps:
\begin{enumerate}
    \item generate $\Sigma / \ior$,
    \item generate $\oppo_K$ for each $K \in \Sigma / \ior$ and 
    \item check which $\oppo_K$ are antisymmetric in order to find $\CoreOcc(D)$, and
    \item for each $S \subseteq  \CoreOcc(D) $, generate the linear extensions of $(\bigcupdot_{K \in S} K, \, \bigcupdot_{K \in S} \oppo_K)$.
\end{enumerate}

\section*{Acknowledgments}
\noindent M.D. wishes to acknowledge the financial support from University College Dublin (OBRSS Research Support Scheme 16426) for this work.

\end{document}